\documentclass[ejs]{imsart}

\pdfoutput=1
\RequirePackage{amsthm,amsmath,amsfonts,amssymb}
\RequirePackage[authoryear]{natbib}
\RequirePackage[colorlinks,citecolor=blue,urlcolor=blue]{hyperref}
\RequirePackage{graphicx}

\usepackage[utf8]{inputenc} 
\usepackage[T1]{fontenc}    
\usepackage{hyperref}       
\usepackage{url}            
\usepackage{booktabs}       
\usepackage{amsfonts}       
\usepackage{amsmath}
\usepackage{amsthm}
\usepackage{amssymb}
\usepackage{microtype}      
\usepackage{xcolor}         
\usepackage{caption}
\usepackage{comment}
\usepackage{mathrsfs}
\usepackage{graphicx}
\usepackage{cleveref}

\startlocaldefs
\theoremstyle{plain}

\newtheorem{theorem}{Theorem}[section]
\newtheorem{lemma}[theorem]{Lemma}

\newtheorem{proposition}[theorem]{Proposition}

\theoremstyle{remark}


\allowdisplaybreaks

\newcommand{\pn}{p^n}
\newcommand{\qn}{q^n}

\newcommand{\htheta}{\wh{\theta}}

\newcommand{\btheta}{\overline{\theta}}
\newcommand{\thetastar}{\theta^\star}
\newcommand{\tloss}{\wt{\loss}}

\newcommand{\shloss}{\loss^{(\eta)}}
\newcommand{\shLoss}{\Loss^{(\eta)}}

\newcommand{\dd}{\mathrm{d}}

  \newcommand{\Y}{\mathcal{Y}}
\newcommand{\F}{\mathcal{F}}
\newcommand{\real}{\mathbb{R}}

\newcommand{\Sw}{\mathcal{S}}

\newcommand{\II}[1]{\mathbb{I}_{\left\{#1\right\}}}
\newcommand{\PP}[1]{\mathbb{P}\left[#1\right]}

\newcommand{\EE}[1]{\mathbb{E}\left[#1\right]}

\newcommand{\EEcc}[2]{\mathbb{E}\left[\left.#1\right|#2\right]}

\def\argmin{\mathop{\mbox{ arg\,min}}}

\newcommand{\ra}{\rightarrow}

\newcommand{\siprod}[2]{\langle#1,#2\rangle}
\newcommand{\iprod}[2]{\left\langle#1,#2\right\rangle}

\newcommand{\norm}[1]{\left\|#1\right\|}
\newcommand{\bnorm}[1]{\bigl\|#1\bigr\|}

\newcommand{\twonorm}[1]{\norm{#1}_2}

\newcommand{\ev}[1]{\left\{#1\right\}}

\newcommand{\pa}[1]{\left(#1\right)}
\newcommand{\bpa}[1]{\bigl(#1\bigr)}

\newcommand{\wh}{\widehat}
\newcommand{\wt}{\widetilde}

\newcommand{\transpose}{^\mathsf{\scriptscriptstyle T}}

\newcommand{\loss}{\ell}
\newcommand{\Loss}{\mathcal{L}}

\usepackage{todonotes}
\definecolor{PalePurp}{rgb}{0.66,0.57,0.66}


\newcommand{\regret}{\mathrm{regret}}

\usepackage[normalem]{ulem}
\usepackage{enumitem}

\newcommand{\Bb}{\mathcal B}

\newcommand{\E}{\mathbb E}

\newcommand{\Ell}{\mathcal L}

\newcommand{\Id}{\mathrm{Id}}

\newcommand{\inner}[2]{\langle #1, #2 \rangle}

\newcommand{\Pp}{\mathbb P}

\newcommand{\regretp}{\regret_{p^n}}
\newcommand{\regretq}{\regret_{q^n}}
\newcommand{\regretql}[1]{\regret_{q^n, {#1}}}

\newcommand{\supp}{\mathrm{supp}}

\definecolor{zaffre}{RGB}{0, 20, 168}

\endlocaldefs

\begin{document}

\begin{frontmatter}
\title{Confidence Sequences for Generalized Linear Models via Regret Analysis}
\runtitle{Confidence Sequences for GLMs via Regret Analysis}

\begin{aug}
\author[A]{\fnms{Eugenio}~\snm{Clerico} \ead[label=e1]{eugenio.clerico@gmail.com}},
\author[A]{\fnms{Hamish}~\snm{Flynn}\ead[label=e2]{hamishedward.flynn@upf.edu}},
\author[B]{\fnms{Wojciech}~\snm{Kot\l owski}\ead[label=e3]{kotlow@gmail.com}},
\and
\author[A]{\fnms{Gergely}~\snm{Neu}\ead[label=e4]{gergely.neu@gmail.com}}
\address[A]{Universitat Pompeu Fabra, Barcelona, Spain.\printead[presep={\\}]{e1,e2,e4}}
\address[B]{Pozna\'n University of Technology, Pozna\'n, Poland.\printead[presep={\\}]{e3}}

\end{aug}

\begin{abstract}
We develop a methodology for constructing confidence sets for parameters of statistical models via a reduction to 
sequential prediction. Our key observation is that for any generalized linear model (GLM), one can construct an 
associated game of sequential probability assignment such that achieving low regret in the game implies a 
high-probability upper bound on the excess likelihood of the true parameter of the GLM. This allows us to develop a 
scheme that we call \emph{online-to-confidence-set conversions}, which effectively reduces the problem of proving the 
desired statistical claim to an algorithmic question. We study two varieties of this conversion 
scheme: 1) \emph{analytical} conversions that only require proving the existence of algorithms with low regret and 
provide confidence sets centered at the maximum-likelihood estimator 2) \emph{algorithmic} conversions that actively 
leverage the output of the online algorithm to construct confidence sets (and may be centered at other, adaptively 
constructed point estimators). The resulting methodology recovers all state-of-the-art confidence set constructions 
within a single framework, and also provides several new types of confidence sets that were previously unknown in the 
literature.
\end{abstract}

\begin{keyword}[class=MSC]
\kwd[Primary ]{62F25}
\kwd{62L12}
\kwd{62J12}
\kwd[; secondary ]{68Q87}
\kwd{68Q32}
\end{keyword}

\begin{keyword}
\kwd{confidence sequences}
\kwd{generalized linear models}
\kwd{sequential probability assignment}
\kwd{online learning}
\kwd{regret analysis}
\end{keyword}

\end{frontmatter}

\section{Introduction}\label{sec:intro}

Building confidence sets for parameters of statistical models is one of the most fundamental questions of statistics. 
In this paper, we consider this problem in the context of generalized linear models (GLMs), where one has access to a 
data set of observations $(X^n,Y^n) = (X_t,Y_t)_{t=1}^n$. Here, $X_t \in \real^d$ is a vector of \emph{covariates} 
(or \emph{features}), $Y_t \in \real$ is a real-valued \emph{label}, and the likelihood of the labels $Y_t$ 
is given by the exponential-family model
\[
 p(y|X_t,\thetastar) = \exp\bpa{\iprod{\theta^\star}{X_t}y - \psi(\iprod{\theta^\star}{X_t})} h(y)\,,
\]
with $\thetastar \in \real^d$ the unknown parameter, $\psi:\real\ra\real$ a convex function (often called the 
\emph{log-partition function}), and $h:\real\ra\real$ the \emph{reference distribution} (independent of $X_t$ or 
$\theta^\star$).
The model can be alternatively written as $Y_t = \mu(\iprod{\theta^\star}{X_t}) + 
\varepsilon_t$, where $\mu = \psi'$ and $\varepsilon_t$ is zero-mean noise whose precise distribution depends on 
$\iprod{\theta^\star}{X_t}$. 
We allow the covariates to be selected sequentially, which means that the distribution of $X_{t+1}$ is determined by the 
sequence $(X^t,Y^t)$. We will use $\mathcal F_t = \sigma(X_1, Y_1, X_2, 
Y_2, \dots, X_t, Y_t, X_{t+1})$ to denote the $\sigma$-field generated by the sequence of observations up to index $t$ and 
the covariate $X_{t+1}$. This model captures many problems of 
fundamental interest: the choice $\psi: z \mapsto \frac {z^2}{2}$ yields linear models with Gaussian 
noise, while logistic regression can be recovered by considering Bernoulli labels $Y_t = 
\mathrm{Bernoulli}\bpa{\mu(\iprod{\theta^\star}{X_t})}$, with $\mu: z\mapsto \frac{1}{1 + e^{-z}}$ being the sigmoid 
link function. The corresponding log-partition function in 
this case is the logistic loss $\psi(z) = \log\pa{1 + e^z}$.

In this work, we aim to develop tight \emph{confidence sequences} for the true parameter $\thetastar$, which are 
sequences of sets $\Theta_1,\dots,\Theta_n\subseteq \real^d$ such that each $\Theta_t$ is determined by $(X^t, Y^t)$ and 
$\thetastar$ is included in \emph{all} sets with high probability.
Formally, we require $\PP{\exists n: \thetastar\not\in\Theta_n} \le \delta$ for some given $\delta > 0$. A set 
satisfying $\PP{\thetastar\not\in\Theta_n} \le \delta$ for a specific value of $n$ will be referred to as a 
\emph{confidence set}. We will 
largely focus on confidence sets and sequences in which each set is defined as the set of parameters whose likelihood 
is nearly maximal:
\begin{equation}\label{eq:likelihood_conf_set}
 \Theta_n = \ev{\theta\in\real^d: \frac{\prod_{t=1}^n p(Y_t|X_t,\theta)}{\sup_{\theta'\in\real^d} \prod_{t=1}^n 
p(Y_t|X_t,\theta')} \ge e^{-\beta_n}}.
\end{equation}
The challenge then is to design suitable choices of the \emph{confidence width} $\beta_n > 0$ such that 
$\Theta_1, \Theta_2, \dots$ is guaranteed to be a valid confidence sequence. In this paper, we develop a methodology that reduces 
the \emph{statistical} problem of designing an appropriate sequence of confidence widths to the \emph{algorithmic} 
problem of showing the existence of a sequential prediction algorithm with guaranteed bounds on their performance (as 
measured by the standard notion of \emph{regret}, to be defined shortly).

Our methodology can be most easily introduced via the following simple example that considers the linear-Gaussian case. 
For such models, the confidence set of Equation~\eqref{eq:likelihood_conf_set} can be equivalently written in terms of 
the squared loss function $\ell_t(\theta) = \frac 12 \pa{\iprod{\theta}{X_t} - Y_t}^2$, as 
\begin{equation}\label{eq:linear_conf_set}
 \Theta_n = \ev{\theta\in\real^d: \inf_{\theta'\in\real^d} \sum_{t=1}^n \pa{{\ell}_t(\theta) - 
{\ell}_t(\theta')} \le \beta_n}.
\end{equation}
For the sake of simplicity, let us now suppose that the maximum-likelihood estimator (MLE) 
$\htheta_n = \argmin_{\theta\in\real^d} 
\sum_{t=1}^n \ell_t(\theta)$ exists. In this case, it can be easily shown that the set $\Theta_n$ is an ellipsoid centered at 
$\htheta_n$.
In order to provide a suitable choice of the width $\beta_n$ of the ellipsoid, we will consider a game between an 
\emph{online learner} and its \emph{environment}, where in each round $t=1,2\dots,n$, the following steps are repeated:
\begin{enumerate}
 \item the environment reveals $X_t$ to the online learner,
 \item the online learner picks a parameter $\theta_t \in \real^d$,
 \item the environment reveals $Y_t$ to the online learner, and
 \item the online learner incurs the loss $\ell_t(\theta_t)$.
\end{enumerate}
The performance of the online learner is measured in terms of its \emph{regret} against a (potentially 
data-dependent) comparator $\theta'\in\real^d$, defined as
\[
 \regret_n(\theta') = \sum_{t=1}^n \bpa{\ell_t(\theta_t) - \ell_t(\theta')}.
\]
Our key observation is that the excess log-likelihood of the true parameter $\thetastar$ relative to the MLE can be decomposed 
as follows:
\begin{equation*}
 \sum_{t=1}^n \pa{\ell_t\pa{\thetastar} - \ell_t\bpa{\htheta_n}} = \underbrace{\sum_{t=1}^n \pa{\ell_t(\theta_t) - \ell_t\bpa{\htheta_n}}}_{\regret_n\bpa{\htheta_n}} + 
\underbrace{\sum_{t=1}^n \pa{\ell_t(\thetastar) - \ell_t(\theta_t)}}_{\text{$\le \log \frac 1\delta$ with high prob}}.
\end{equation*}
Here, we have noticed that the first term in the decomposition corresponds to the regret of the online 
learner against $\htheta_n$, and that since the online learner has to make its prediction $\theta_t$ before seeing the 
label $Y_t$, it cannot have lower squared-loss prediction error than the true parameter $\thetastar$, in expectation at least. As a result, the 
last term in the decomposition can be shown to be a light-tailed supermartingale. Notably, the argument holds for 
\emph{any} online learning algorithm, and thus in order to construct a confidence set of the form given in 
Equation~\eqref{eq:linear_conf_set} with small $\beta_n$, one only needs to show the \emph{existence} of an online 
learner with low regret against the data-dependent comparator point $\htheta_n$. 

For the online-learning game considered above, the best possible regret guarantees are of the order $\max_{t} Y_t^2 
\cdot d\log n$, achieved by the so-called Vovk--Azoury--Warmuth forecaster (cf.~\citealp{AW01}, \citealp{Vov01}, and 
Section~11.8 in \citealp{cesa2006prediction}). Ultimately, this leads to a confidence width $\beta_n$ of the order $d 
\pa{\log 
n}^2 + \log \frac 1\delta$, which is inferior to the optimal rate of $d\log n + \log \frac 1\delta$,
suggesting that the reduction we propose above is unlikely to yield tight confidence sets, at least in the simple form 
defined above. We address this issue by 
considering a closely related online learning game of \emph{sequential probability assignment}, which allows us to 
derive confidence sequences of \emph{optimal} width in a number of important cases of interest. 

Besides making the above argument fully rigorous, in what follows we will extend this simple approach along multiple 
axes, such as going beyond linear models, providing reductions to more flexible and powerful online algorithms, or 
considering confidence sets of alternative shapes. We will refer to our general methodology as 
\emph{Online-to-Confidence-Set Conversions} (following the terminology introduced by \cite{abbasi2012online}, 
as discussed below).
One aspect of the conversion scheme that we will particularly develop is the manner in which the online 
algorithm is used for constructing the confidence sequence. In the argument above, we have used the online algorithm 
only for the sake of analysis: there was no need to ever run the algorithm, as all that the argument needed was showing 
the existence of a method with low regret. We will call such conversions \emph{analytic}. Alternatively, one can 
use the output of the online algorithm more actively in constructing the confidence sequence, by centering the sets at 
parameters other than the MLE $\htheta_n$. We will refer to such conversion schemes as \emph{algorithmic}.

The idea of using sequential predictions to construct confidence sets is far from being new: it goes back at least 
to the work of \citet{RS70}. More recently, the same idea has resurfaced in 
the work of \cite{abbasi2012online} in a form very similar to our framework. We acknowledge this similarity by 
adopting their aptly chosen term ``online-to-confidence-set conversion'', whose meaning we expand significantly in our 
work. In particular, the results of \cite{abbasi2012online} fall into the category of \emph{algorithmic} 
online-to-confidence-set conversions; our framework highlights the broader context of their work by placing it into a 
larger system of reduction schemes. More recently, online-to-confidence-set conversions were used in a variety of 
settings, but mostly in an ad-hoc way specialized to narrow subclasses of GLMs 
\citep{jun2017scalable,lee2024improved,emmenegger2023likelihood}. Further related works that make use 
of similar ideas without making the connection with regret analysis explicit 
include \citet{gales2022norm}, \citet{flynn2023improved}, and \citet{lee2024unified}. The results in the present work recover all of these 
as special cases, and in several cases improve over them in various aspects.

More broadly, this work fits into a recent wave of statistical literature sometimes referred to as 
\emph{algorithmic statistics}. 
Algorithmic statistics is intended here\footnote{While closely related, the work of \citet{gacs2001algorithmic} titled 
``Algorithmic Statistics'' studies a more narrow set of statistical approaches, which we regard as only one part 
algorithmic statistics in our terminology.} as the statistical counterpart of \emph{algorithmic probability theory}, an 
alternative foundation to classic measure-theoretic probability theory based on game-theoretic and algorithmic 
constructions rooted in the later works of Kolmogorov---see the book of \citet{SV01} for a detailed development of this 
theory, and \citet{vovk2003kolmogorov} for a brief historical overview.
The most notable representatives of the line of work we refer to as algorithmic statistics are the works 
\cite{orabona2024tight, 
waudby2023estimating}, which reduce the problem of mean estimation of bounded random variables to a sequential 
prediction problem, constructing a martingale whose concentration properties are exploited. In particular, 
\cite{waudby2023estimating} takes an approach where by explicitly running a sequential online strategy one builds the 
martingale, while the take of \cite{orabona2024tight} is closer to our analytic method, where the sequential strategy 
does not need to be played in practice as the desired result directly follows from the existence of a suitable regret 
bound for the strategy considered. A similar analytic reduction was also proposed in the online-to-PAC framework of 
\cite{lugosi2024online}, where the existence of a regret bound for a specifically designed online game yields a  
generalization bound for a statistical learning problem. All of these works drew inspiration (at least implicitly) from 
\citet{RS17}, who were the first to explicitly point out the tight connection between concentration inequalities and 
regret analysis, and have proved a general \emph{equivalence} result between martingale tail bounds 
and regret bounds. In a way, all works listed above are merely turning this general observation into concrete, 
quantitative results by repurposing regret minimization algorithms for statistical inference, 
often leading to major improvements over the best previously known results in this context. Our work fits directly into 
this theme as well.

An important additional note about related work is that the results presented in the present manuscript have a 
considerable overlap with the very recent preprint of \citet{kirschner2025confidence} that proposes a general framework 
for deriving confidence sequences. The basic principles underlying their framework are essentially the same as the ones 
we have used as starting point, and many general observations appear in both works. Most notably, our analytic 
online-to-confidence-set conversion appears in nearly identical form in their Section~3.3. Unlike our 
work that exhibits a broad range of confidence sets from this specific reduction, \citet{kirschner2025confidence} 
operate at a different level of generality and present online-to-confidence-set conversions as one of several 
methodologies for developing confidence sets via sequential likelihood ratios. We regard this parallel\footnote{We have 
verified via personal communication with the authors that they agree with this interpretation.} work as complimentary to 
ours.

The rest of the paper is organized as follows. In Section~\ref{sec:glm_model}, we fill the gaps in the definitions 
given above by introducing the full set of our assumptions and describing our online-to-confidence-set framework in 
full technical detail. In Section~\ref{sec:regret}, we present a variety of regret bounds that we will combine with 
our reduction framework in Section~\ref{sec:cases} to derive concrete confidence sets for GLMs. We close with a 
final discussion of the framework, the results, and future research directions in Section~\ref{sec:conclusion}.

\textbf{Notation.} 
For any measurable space $\Sw$, we use $\Delta_{\Sw}$ to denote the set of all probability distributions on $\Sw$, and 
we will use the notation $s^n=\pa{s_1,s_2,\dots,s_n}$ to denote sequences in $\Sw^n$. We will use $\mathbb{I}$ to 
denote the 0-1 indicator function, taking value $\II{E}=1$ if the logical expression $E$ is true and $\II{E} = 0$ 
otherwise. For any positive integer $d$, $[d] = \{1, \dots, d\}$ is the set of integers from 1 to $d$.

\section{Online-to-confidence-set conversions}\label{sec:glm_model}
We now introduce our general framework for constructing confidence sets via a reduction to regret analysis of 
online algorithms. We will consider generalized linear models (GLMs) as described at the beginning of 
Section~\ref{sec:intro}, and will use the notation
$$\ell_t(\theta) = -\inner{\theta}{X_t}Y_t + \psi(\inner{\theta}{X_t}) -\log h(Y_t)\,,$$ 
corresponding to the negative log likelihood $-\log(p(Y_t|X_t, \theta))$ of $Y_t$ given $X_t$ and $\theta$. In our 
framework, we will often think of $\ell_t$ as a \emph{loss function} associated with the statistical model, 
and study algorithms that aim to ``minimize'' this loss in an appropriate sense. Notice that 
the last term in $\ell_t$ is independent of $\theta$, and can be often omitted when analyzing such algorithms.

Instead of the online-learning game outlined in Section~\ref{sec:intro}, we will consider the more flexible setup of 
\emph{sequential probability assignment} (also often called \emph{online prediction under the logarithmic loss}). 
A crucial component of this setup is the \emph{logarithmic loss} (or simply the \emph{log loss}) associated with a 
distribution $P\in\Delta_{\mathcal Y}$ with density $p$ and example $X_t,Y_t$, defined as
\[
\mathcal{L}_t(p) = -\log p(Y_t).
\]
Concretely, we we will consider a sequential game between an online learner and its environment, with the following 
steps repeated in each round $t=1,2,\dots,n$:
\begin{enumerate}
 \item the environment reveals $X_t$ to the online learner,
 \item the online learner picks a distribution $P_t \in \Delta_{\mathcal{Y}}$ with density function $p_t$,
 \item the environment reveals $Y_t$ to the online learner, and
 \item the online learner incurs the log loss $\Loss_t(p_t) = - \log p_t(Y_t)$.
\end{enumerate}
The performance of an online learning algorithm producing outputs $\pn = \pa{p_1, \dots, p_n}$ is measured in terms 
of its \emph{regret} against a comparator strategy that makes its predictions according to $p(\cdot|X_t,\bar{\theta})$ 
for some fixed $\bar{\theta} \in \Theta$:
\begin{equation}\label{eq:regretbayes}
\regretp(\bar{\theta}) = \sum_{t=1}^{n} \pa{\mathcal{L}_{t}(p_t) - \Loss_t\pa{p(\cdot|X_t,\bar{\theta})}} = 
\sum_{t=1}^{n} \pa{\mathcal{L}_{t}(p_t) - \ell_t(\bar{\theta})}\,,
\end{equation}
where we noticed in the last step that $\Loss_t\pa{p(\cdot|X_t,\bar{\theta})} = \ell_t(\bar{\theta})$.

We will play special attention to \emph{mixture forecasters} that pick a distribution $q_t \in \Delta_{\Theta}$ over 
the parameter space in each round $t$, and use the following mixture distribution for predicting $Y_t$:
\[
 p_t = \int_{\Theta} p(\cdot|X_t,\theta) \dd q_t(\theta).
\]
With a slight abuse of our earlier notation, we use $\Loss_t$ to denote loss associated with a mixture 
$q\in\Delta_{\Theta}$: 
\begin{equation}\label{eq:mixloss}
 \Loss_t(q) = -\log \int_{\Theta} p(Y_t|X_t,\theta) \dd q(\theta) = - \log \int_{\Theta} e^{-\loss_t(\theta)} \dd 
q(\theta).
\end{equation}
The regret of a mixture forecaster producing the sequence $\qn = \pa{q_1,\dots,q_n}$ is defined accordingly as
\begin{equation}\label{eq:regretq}
\regretq(\bar{\theta}) = \sum_{t=1}^{n} \pa{\mathcal{L}_{t}(q_t) - \ell_t(\bar{\theta})}\,.
\end{equation}
If $q^n = \pa{q_1, \dots, q_n}$ is a sequence of Dirac delta distributions supported on $\theta^n = 
\pa{\theta_1,\dots,\theta_n}$, the game corresponds to the setup described in Section~\ref{sec:intro}, and we write
\begin{equation}\label{eq:regret_det}
\regret_{\theta^n}(\bar{\theta}) = \sum_{t=1}^{n} \pa{\ell_t(\theta_t) - \ell_t(\bar{\theta})}.
\end{equation}
The problem of designing algorithms with guaranteed bounds on their worst-case regret is well-studied within 
theoretical computer science, statistics and machine-learning theory. We review the most effective methods and state 
the guarantees most relevant to our setting in Section~\ref{sec:regret}.

In what follows, we propose a comprehensive methodology for deriving confidence sets for GLMs using various reductions 
to the sequential prediction game defined above. We will collectively refer to these techniques as 
\emph{online-to-confidence-set conversions}, and classify them into the following two types: 
\emph{analytic} conversions that prove validity of confidence sets centred at arbitrary data-dependent parameters, and 
only use regret analysis within the proofs, and \emph{algorithmic} conversions that make use of the sequence of 
predictions made by the online algorithm for constructing the confidence set. Both conversion schemes build on the 
concept of \emph{sequential likelihood ratios}, which we present first below, followed by the detailed 
description of the conversions.

\subsection{Sequential likelihood ratios}\label{sec:likelihood-ratio-method}
For any $\F_{t-1}$-measurable prediction $p_t\in\Delta_{\mathcal{Y}}$, the difference 
$\bpa{\ell_t(\theta^\star) - \mathcal{L}_t(p_t)}$ can be seen to be the logarithm of a likelihood ratio statistic: 
\begin{equation*}
\begin{split}
\ell_t(\thetastar) - \mathcal{L}_t(p_t) &= \log p_t(Y_t) - \log(p(Y_t|X_t, \theta^{\star})) = 
\log\left(\frac{p_t(Y_t)}{p(Y_t|X_t, \thetastar)}\right).
\end{split}
\end{equation*}
Thus, the sum of these differences is a \emph{sequential likelihood ratio}, to which the following classic result 
applies:
\begin{proposition}\label{prop:generic_lbd}
Let $\pn = (p_1,\dots,p_n)$ be a sequence of distributions over $\mathcal{Y}$ such that each $p_t$ is 
$\F_{t-1}$-measurable. Then, for any $\delta > 0$,
\begin{equation}\label{eq:no-hypercompression}
\PP{\exists n \geq 1: \sum_{t=1}^{n}\ell_t(\theta^\star) - \sum_{t=1}^{n}\mathcal{L}_t(p_t) \geq 
\log(1/\delta)} \leq \delta\,.
\end{equation}
\end{proposition}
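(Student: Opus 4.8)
The plan is to recognize the exponentiated sum as a nonnegative martingale and invoke Ville's maximal inequality. Concretely, I would define the likelihood-ratio process
\[
M_n = \prod_{t=1}^{n} \frac{p_t(Y_t)}{p(Y_t|X_t,\thetastar)}, \qquad M_0 = 1,
\]
and observe, using the identity $\ell_t(\thetastar) - \Loss_t(p_t) = \log\bpa{p_t(Y_t)/p(Y_t|X_t,\thetastar)}$ established just above the statement, that $M_n = \exp\bpa{\sum_{t=1}^n \ell_t(\thetastar) - \sum_{t=1}^n \Loss_t(p_t)}$. Consequently the event $\ev{\sum_{t=1}^n \ell_t(\thetastar) - \sum_{t=1}^n \Loss_t(p_t) \geq \log(1/\delta)}$ is exactly $\ev{M_n \geq 1/\delta}$, and the claim reduces to a uniform-over-$n$ tail bound for $M_n$.

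The heart of the argument is to show that $(M_n)_{n\geq 0}$ is a nonnegative martingale with respect to the filtration $(\F_n)$. Nonnegativity is immediate since each factor is a ratio of densities. For the martingale property, I would condition on $\F_{n-1}$ and use that both the predictive density $p_n$ and the covariate $X_n$ are $\F_{n-1}$-measurable (recall $\F_{n-1}$ already contains $X_n$), while $Y_n$ is drawn from the true model $p(\cdot|X_n,\thetastar)$ given $\F_{n-1}$. This yields
\[
\EEc{M_n}{\F_{n-1}} = M_{n-1}\,\EEc{\frac{p_n(Y_n)}{p(Y_n|X_n,\thetastar)}}{\F_{n-1}} = M_{n-1}\int_{\Y} p_n(y)\,\dd y = M_{n-1},
\]
where the change of measure cancels the denominator and the final equality uses that $p_n$ is a probability density. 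Since $M_0 = 1$, this establishes that $M_n$ is a mean-one nonnegative martingale.

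With the martingale property in hand, the conclusion follows from Ville's inequality for nonnegative supermartingales, which gives $\PP{\exists n \geq 1: M_n \geq 1/\delta} = \PP{\sup_{n\geq 1} M_n \geq 1/\delta} \leq \delta\,\EE{M_0} = \delta$. The only genuinely delicate point is the measurability bookkeeping in the martingale step: one must use precisely that $p_t$ is predictable (i.e.\ $\F_{t-1}$-measurable, so fixed before $Y_t$ is revealed) and that, conditionally on $\F_{t-1}$, the label $Y_t$ follows the true GLM law $p(\cdot|X_t,\thetastar)$. This is what makes the normalization exact, and is exactly where the sequential structure of the data (adaptively chosen covariates, with $X_{t+1}$ already in $\F_t$) enters; the remainder is the standard likelihood-ratio-is-a-martingale computation together with Ville's inequality.
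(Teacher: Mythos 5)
Your proof is correct and follows essentially the same route as the paper's: you form the likelihood-ratio process $M_n = \prod_{t=1}^n p_t(Y_t)/p(Y_t|X_t,\theta^\star)$, verify it is a nonnegative mean-one martingale using the $\F_{t-1}$-measurability of $p_t$ and $X_t$, and conclude via Ville's inequality. Your explicit remark that $X_{t+1}\in\F_t$ makes the change-of-measure step exact is precisely the measurability point the paper's proof relies on.
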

\begin{proof}
First, note that we can write
\begin{equation*}
\begin{split}
\sum_{t=1}^{n}\bpa{\ell_t(\thetastar) - \mathcal{L}_t(p_t)} &= \sum_{t=1}^{n}\log p_t(Y_t) - 
\sum_{t=1}^{n}\log(p(Y_t|X_t, \theta^{\star}))\\
&= \log\left(\frac{\prod_{t=1}^{n}p_t(Y_t)}{\prod_{t=1}^{n}p(Y_t|X_t, \thetastar)}\right)\,.
\end{split}
\end{equation*}
Let $M_n = \prod_{t=1}^n \frac{p_t(Y_t)}{p(Y_t|X_t,\thetastar)}$. Using that $p_t$ and $X_t$ are $\mathcal F_{t-1}$ 
measurable, we easily see that this defines a non-negative martingale. Indeed, $$\E[M_t|\F_{t-1}] = M_{t-1}\int_\mathcal Y 
\frac{p_t(y)p(y|X_t,\thetastar)}{p(y|X_t,\thetastar)}\dd y = M_{t-1}\int_\mathcal Y p_t(y)\dd y = 
M_{t-1}\,.$$ Applying Ville's inequality yields
$$\mathbb{P}\left[\forall n \geq 1: \log M_n \le\log\frac{1}{\delta}\right] \geq 1 - \delta\,.$$
\end{proof}
This result is of course classic: it can be traced back to \citet{Vil39}, \citet{Wal45}, and \citet{Rob70}, and is sometimes 
known as the ``no-hypercompression inequality'' (cf.~\cite{grunwald2007minimum}, Chapter~3).
In words, it states that no matter how each prediction $p_t$ is selected, the associated total log 
loss will typically be larger than the loss incurred by the ``oracle'' predictor that knows the true parameter 
$\thetastar$. This observation can already be used to construct valid confidence sequences for $\thetastar$: indeed, 
one just needs to execute an online learning algorithm to produce a sequence of 
predictions $p_1,\dots,p_n$, and consider the set of all parameters $\theta$ with log-likelihood not much smaller than 
$\sum_{t=1}^n \Loss_t(p_t)$. This is in fact a classic recipe that has been used for designing confidence sets at 
least since the work of \citet{RS70}. The tightness of the resulting confidence sets, however, depends on the quality 
of the sequence of predictions: the smaller the total total loss $\sum_{t=1}^n \Loss_t(p_t)$ is, the better the 
obtained confidence sets. The most straightforward way to achieve this goal is then to use one of the many known online 
algorithms guaranteeing low worst-case regret to generate the sequence $p_1,\dots,p_n$, but this approach has several 
downsides. 
Most notably, algorithms guaranteeing low regret are often computationally intractable, and thus obtaining tight bounds 
may be very expensive (or not affordable at all). This limitation is often significant, in that cheaper alternatives 
may very well result in much worse guarantees (as is the case in the linear regression example described in 
Section~\ref{sec:intro}). This limitation is addressed by the conversion scheme to be described next.

\subsection{Analytic online-to-confidence-set conversions}\label{sec:analytic}
As an alternative to the na\"ive method suggested above, one may opt to define confidence sets of the following simpler 
form instead:
\begin{equation}\label{eq:GLM_conf_set}
 \Theta_n = \ev{\theta\in\real^d: \sum_{t=1}^n \pa{{\ell}_t(\theta) - {\ell}_t(\btheta_n)} \le \beta_n},
\end{equation}
where $\btheta_n \in \real^d$ is a data-dependent \emph{reference parameter}, which may be much cheaper to compute than 
a sequence of well-performing predictions $p_1,\dots,p_n$. An easy choice can be the
MLE $\htheta_n = \min_{\theta\in\real^d} \sum_{t=1}^n \ell_t(\theta)$ (given that it 
exists). For such a confidence set, the main challenge is to choose the confidence-width parameter $\beta_n$ in a way 
that $\Theta_n$ includes the true parameter $\thetastar$ with high probability. Our first main result is the following 
theorem, which shows that a suitable $\beta_n$ can be found via a reduction to regret analysis. 
\begin{theorem}\label{thm:main-reduction}
 Let $\pn=\pa{p_1,\dots,p_n}$ be a sequence of distributions over $\mathcal{Y}$ such that each $p_t$ is 
$\F_{t-1}$-measurable. 
Then, for any $\delta\in(0,1)$, the set defined in Equation~\eqref{eq:GLM_conf_set} satisfies $\PP{\exists n: \thetastar\not\in 
\Theta_n} \le \delta$ with the choice
\[
 \beta_n = \regretp(\btheta_n) + \log \frac 1\delta.
\]
\end{theorem}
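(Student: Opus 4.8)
The plan is to reduce the statement directly to Proposition~\ref{prop:generic_lbd} via an exact algebraic cancellation, so that no new probabilistic work is needed beyond the martingale argument already established there. First I would unfold the failure event: by the definition in Equation~\eqref{eq:GLM_conf_set}, the event $\ev{\thetastar \notin \Theta_n}$ is precisely $\bev{\sum_{t=1}^n \bpa{\ell_t(\thetastar) - \ell_t(\btheta_n)} > \beta_n}$.

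The key step is then to insert the predictions $p_t$ into this excess-loss sum via the decomposition
\begin{equation*}
 \sum_{t=1}^n \bpa{\ell_t(\thetastar) - \ell_t(\btheta_n)} = \sum_{t=1}^n \bpa{\ell_t(\thetastar) - \Loss_t(p_t)} + \sum_{t=1}^n \bpa{\Loss_t(p_t) - \ell_t(\btheta_n)}\,,
\end{equation*}
recognizing via Equation~\eqref{eq:regretbayes} that the second sum is exactly $\regretp(\btheta_n)$. Substituting the prescribed width $\beta_n = \regretp(\btheta_n) + \log\frac{1}{\delta}$, the regret terms on the two sides cancel identically, so the failure event reduces to $\bev{\sum_{t=1}^n \bpa{\ell_t(\thetastar) - \Loss_t(p_t)} > \log\frac{1}{\delta}}$.

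At this point I would simply invoke Proposition~\ref{prop:generic_lbd}: since each $p_t$ is $\F_{t-1}$-measurable by hypothesis, the sequential likelihood ratio $\sum_{t=1}^n \bpa{\ell_t(\thetastar) - \Loss_t(p_t)}$ reaches $\log\frac{1}{\delta}$ for some $n$ with probability at most $\delta$. Because the proposition is already stated in the uniform ``$\exists n$'' form, and the strict event $\ev{\,\cdot > \log\frac1\delta\,}$ is contained in the non-strict one $\ev{\,\cdot \ge \log\frac1\delta\,}$, this immediately yields $\PP{\exists n: \thetastar \notin \Theta_n} \le \delta$.

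The point that deserves the most care---though it turns out to be a feature rather than an obstacle---is that the comparator $\btheta_n$ is \emph{data-dependent} (for instance the MLE), so at first glance the martingale argument behind Proposition~\ref{prop:generic_lbd} might seem inapplicable, as that result controls the likelihood ratio only against the \emph{fixed} oracle parameter $\thetastar$. The construction is engineered precisely to sidestep this: all dependence on $\btheta_n$ is confined to the regret term $\regretp(\btheta_n)$, which appears with identical value both in the excess-loss decomposition and in the definition of $\beta_n$, and hence cancels. No uniform-over-comparators control is therefore required, and the martingale property of $M_n = \prod_{t=1}^n p_t(Y_t)/p(Y_t|X_t,\thetastar)$ for the single true parameter $\thetastar$ already suffices.
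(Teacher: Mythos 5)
Your proposal is correct and follows essentially the same route as the paper's own proof: the identical decomposition of $\sum_{t=1}^n \bpa{\ell_t(\thetastar) - \ell_t(\btheta_n)}$ into $\regretp(\btheta_n)$ plus the martingale term $\sum_{t=1}^n \bpa{\ell_t(\thetastar) - \Loss_t(p_t)}$, followed by an application of \Cref{prop:generic_lbd}. Your closing observation---that the data-dependent comparator $\btheta_n$ causes no trouble because it cancels between the decomposition and the definition of $\beta_n$, so the martingale is only ever evaluated at the fixed $\thetastar$---is exactly the mechanism the paper's argument relies on as well.
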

\begin{proof}
Fix an online learning algorithm and, for any $\theta \in \real^d$, write
\begin{align*}
\sum_{t=1}^n \pa{\loss_t\pa{\theta} - \ell_t\bpa{\btheta_n}} = 
\underbrace{\sum_{t=1}^n \pa{\Loss_t(p_t) - \ell_t\bpa{\btheta_n}}}_{\regretp\bpa{\btheta_n}} + 
\underbrace{\sum_{t=1}^n \pa{\ell_t(\theta) - \Loss_t(p_t)}}_{M_n(\theta)}\,,
\end{align*}
where we defined $M_n(\theta)$ in the last line, and identified the first sum in the decomposition as the regret of 
the online algorithm against $\btheta_n$. The claim then follows by applying \Cref{prop:generic_lbd} 
to show that 
$\sup_{n \ge 1} M_n(\thetastar) \le \log 
\frac{1}{\delta}$ holds with probability at least $1-\delta$.
\end{proof}

In words, Theorem~\ref{thm:main-reduction} states that if we can upper bound the worst-case regret of an online 
algorithm, then the confidence sequence of Equation~\eqref{eq:GLM_conf_set} will include the true 
parameter $\thetastar$ with high probability. Note that the online algorithm does not need to be executed for 
constructing the confidence set, and one only needs to show the \emph{existence} of an algorithm with low regret for 
the sake of analysis---which is the reason why we refer to this conversion technique as ``analytic''. This addresses 
the computational concerns with the na\"ive sequential likelihood-ratio method we discussed in 
Section~\ref{sec:likelihood-ratio-method}: since there is no need to actually run the online learning algorithm, we can 
choose the method with the best regret bound regardless of how impractical it may be. Furthermore, it is important to 
note that typical regret bounds are guaranteed to hold with probability $1$ for all sequences $(X_t,Y_t)$, and thus the 
conversion scheme suggested by the above theorem cleanly splits the complex statistical problem of designing a suitable 
confidence set into a purely statistical question (bounding a supermartingale via \Cref{prop:generic_lbd}) and a purely 
algorithmic question (bounding the regret).

Not having to run the online algorithm comes with other major conceptual advantages. In particular, the algorithm used 
in this analytic construction may make use of information that would not be available for a less sophisticated 
approach. Most remarkably, the online learner may even have access to $\thetastar$ and use it in making its 
predictions. Other improvements can be achieved when the 
sequence of covariates $X_1,\dots,X_n$ is fixed in advance or is drawn i.i.d., by revealing this sequence to the 
online learner ahead of time (known as \emph{transductive online learning}---see Section~\ref{sec:cases_oblivious}.) 
Finally, note that the tightness of the set is also influenced by the choice of the reference
parameter $\btheta_n$, as can be readily observed after rewriting $\Theta_n$ as
\begin{equation*}
 \Theta_n = \ev{\theta\in\real^d: \sum_{t=1}^n {\ell}_t(\theta) \le \sum_{t=1}^n {\ell}_t(\btheta_n) + 
\regretp(\btheta_n) + \log \frac 1\delta}.
\end{equation*}
Thus, the confidence width is subject to a tradeoff between the total loss of the reference point and the best 
achievable regret against the chosen $\btheta_n$. One can thus restrict the online algorithm to only aim to compete 
with specific choices of $\btheta_n$ that are known to be ``good enough'' in terms of total loss, and enjoy the 
benefits of reduced regret against this restricted comparator. We 
demonstrate the power of this general approach in Section~\ref{sec:cases_analytic}, where we 
use it to recover and tighten a number of state-of-the-art concentration inequalities.

\subsection{Algorithmic online-to-confidence-set conversions}\label{sec:algorithmic}
The analytic conversions we presented in the previous section provide a rigorous alternative to the na\"ive 
sequential likelihood ratio method outlined in Section~\ref{sec:likelihood-ratio-method}. The tightness of the 
resulting bounds depended on the best achievable regret against the data-dependent comparator $\btheta_n$. It is 
natural to ask if it is possible to design confidence sets whose size depends on the best achievable regret against the 
\emph{true parameter} $\thetastar$ instead, to make use of potential additional structure that may be present in the 
problem at hand. In this section, we provide a methodology that achieves this. In what follows, we present a 
simplified recipe for deterministic forecasters that select each $q_t$ as a point mass concentrated on $\theta_t$, 
and provide a tighter (but technically more involved) result in Section~\ref{sec:cases_algorithmic}.

Our algorithmic conversions are stated using the concept of \emph{$\eta$-shifted losses}, defined for any $\eta \in 
(0, 1]$ as $$\shloss_{t}(\theta) = \ell_t\big(\eta\theta + (1-\eta)\theta^\star\big)\,.$$ Clearly, we have that for $\eta=1$ this loss coincides 
with $\ell_t$. We remark that, for any $\eta$, we have that $\shloss_t(\theta^\star) = 
\ell_t(\theta^\star)$. The following proposition gives a result analogous to \Cref{prop:generic_lbd} for the total 
losses of mixture forecasters. Let us also define the associated \emph{shifted log loss} of a mixture forecaster as $$\shLoss_{t}(q) = -\log\int e^{-\shloss_{t}(\theta)}\dd 
q(\theta)\,,$$ where $q\in\Delta_\Theta$. 
For $\eta=1$, this corresponds to the loss $\Ell_t$ for the mixture forecaster defined in Equation \eqref{eq:mixloss}.

\begin{proposition}\label{prop:shifted_lbd}
Let $q^n = (q_1,\dots,q_n)$ be a sequence of distributions on $\Theta$ such that each $q_t$ is $\F_{t-1}$-measurable. 
Then, for any 
$\eta\in (0, 1]$, and any $\delta > 0 $,
\begin{equation*}
\Pp\left[\exists n \geq 1\,:\; \sum_{t=1}^{n}\ell_t(\theta^\star) - 
\sum_{t=1}^{n}\shLoss_{t}(q_t) \geq \log(1/\delta)\right] \leq \delta\,.
\end{equation*}
\end{proposition}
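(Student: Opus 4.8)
The plan is to follow the template of the proof of \Cref{prop:generic_lbd} almost verbatim: I would build a non-negative martingale out of the exponentiated partial sums and close with Ville's inequality. Concretely, I would set (with the convention $M_0 = 1$)
\[
M_n = \exp\!\pa{\sum_{t=1}^n \ell_t(\thetastar) - \sum_{t=1}^n \shLoss_t(q_t)} = \prod_{t=1}^n e^{\ell_t(\thetastar)} \int_\Theta e^{-\shloss_t(\theta)}\,\dd q_t(\theta),
\]
so that the event in the statement is exactly $\ev{\exists n\ge 1: M_n \ge 1/\delta}$. Unfolding the losses through $e^{-\ell_t(\theta)} = p(Y_t|X_t,\theta)$ and $e^{-\shloss_t(\theta)} = p\bpa{Y_t|X_t, \eta\theta + (1-\eta)\thetastar}$, the multiplicative increment rewrites as
\[
\frac{M_t}{M_{t-1}} = \frac{1}{p(Y_t|X_t,\thetastar)}\int_\Theta p\bpa{Y_t|X_t, \eta\theta + (1-\eta)\thetastar}\,\dd q_t(\theta),
\]
which is manifestly non-negative.

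Next I would check the martingale property. Since $\F_{t-1}$ contains both $X_t$ and, by assumption, $q_t$, conditioning on $\F_{t-1}$ fixes $X_t$ and $q_t$ while $Y_t$ is distributed according to $p(\cdot|X_t,\thetastar)$. Taking the conditional expectation over $Y_t$, the denominator $p(Y_t|X_t,\thetastar)$ cancels against the conditional law of $Y_t$, and after exchanging the two (non-negative) integrals via Tonelli one is left with
\[
\E\!\left[\frac{M_t}{M_{t-1}}\,\middle|\,\F_{t-1}\right] = \int_\Theta \pa{\int_{\mathcal Y} p\bpa{y|X_t, \eta\theta + (1-\eta)\thetastar}\,\dd y}\dd q_t(\theta) = 1,
\]
where the inner integral equals one because, for every fixed $\theta$, the affine combination $\eta\theta + (1-\eta)\thetastar$ again lies in $\real^d$, on which the exponential-family density $p(\cdot|X_t,\cdot)$ is normalized. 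Hence $M_n$ is a non-negative martingale started at $M_0 = 1$, and Ville's inequality gives $\Pp[\exists n\ge 1: M_n \ge 1/\delta]\le \delta$, which is the claim after taking logarithms.

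The single point I expect to require genuine care—and the reason this is \emph{not} an immediate corollary of \Cref{prop:generic_lbd}—is the status of the effective prediction $\tilde p_t(y) = \int_\Theta p\bpa{y|X_t, \eta\theta + (1-\eta)\thetastar}\,\dd q_t(\theta)$. Because it involves $\thetastar$, this $\tilde p_t$ is not $\F_{t-1}$-measurable and therefore is \emph{not} an admissible forecaster in the sense of \Cref{prop:generic_lbd}; one cannot simply invoke that proposition as a black box. What rescues the argument is that $\tilde p_t$ is nevertheless a bona fide normalized density for every value of $\thetastar$, and normalization is all the martingale computation actually uses. The only thing to verify is thus that the $\eta$-shift, being an affine map, keeps $\eta\theta + (1-\eta)\thetastar$ inside the parameter space so that the normalization identity $\int_{\mathcal Y} p(y|X_t,\vartheta)\,\dd y = 1$ still applies pointwise; this is the (modest) crux on which the whole proposition turns.
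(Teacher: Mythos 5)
Your proof is correct, and it is built on exactly the same skeleton as the paper's: the same exponentiated process $M_n$, the same verification that its increments have conditional mean one, and Ville's inequality to finish. The only difference is how the mean-one property is checked. The paper expands $\EEcc{e^{\ell_t(\thetastar)-\shloss_t(\theta)}}{\F_{t-1}}$ and invokes the exponential-family identity $\log\EEcc{e^{\beta Y_t}}{\F_{t-1}} = \psi(\beta + \iprod{\thetastar}{X_t}) - \psi(\iprod{\thetastar}{X_t})$ with $\beta = \eta\iprod{X_t}{\theta-\thetastar}$; you instead observe that $e^{-\shloss_t(\theta)} = p\bpa{Y_t|X_t,\eta\theta+(1-\eta)\thetastar}$, so the cancellation reduces to the normalization $\int_{\Y} p(y|X_t,\vartheta)\,\dd y = 1$ at the shifted parameter. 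These are the same computation in different notation (the MGF identity \emph{is} the statement that the shifted-parameter density normalizes), so your version is a perfectly valid and arguably more transparent presentation. One thing the paper's MGF phrasing buys that yours does not: as noted in the paper's Discussion, all results extend to ``sub-exponential families'' where only the inequality $\EEcc{e^{\beta Y_t}}{\F_{t-1}} \le e^{\psi(\beta+\iprod{\thetastar}{X_t})-\psi(\iprod{\thetastar}{X_t})}$ holds; there $M_n$ is merely a supermartingale, and the MGF route adapts verbatim, whereas your normalization argument is tied to exact densities.

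Your closing caveat, however, has the measurability logic backwards. Since $\thetastar$ is a fixed, deterministic (if unknown) parameter, the oracle mixture $\tilde p_t(y) = \int_\Theta p\bpa{y|X_t,\eta\theta+(1-\eta)\thetastar}\,\dd q_t(\theta)$ \emph{is} $\F_{t-1}$-measurable: it is a deterministic function of the $\F_{t-1}$-measurable quantities $X_t$ and $q_t$ and of the constant $\thetastar$. What $\tilde p_t$ fails is \emph{implementability} by the learner, not measurability, and \Cref{prop:generic_lbd} requires only the latter. Consequently, \Cref{prop:shifted_lbd} \emph{is} an immediate corollary of \Cref{prop:generic_lbd} applied with $p_t = \tilde p_t$, once one notes $\shLoss_t(q_t) = -\log \tilde p_t(Y_t)$ and that $\tilde p_t$ is a normalized density (your ``modest crux''). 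Indeed, the paper explicitly embraces such oracle forecasters in Section~\ref{sec:analytic}: ``the online learner may even have access to $\thetastar$ and use it in making its predictions.'' So the distinction you flag as the reason a black-box reduction fails is not a real obstruction; recognizing this would have let you shorten the proof rather than redo the martingale computation.
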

\begin{proof}
First, note that
\begin{align*}
&\EEcc{e^{\ell_n(\theta^\star) - \shLoss_{t}(q_n)}}{\mathcal{F}_{n-1}} = 
\int\EEcc{e^{\ell_n(\theta^\star)-\shloss_t(\theta)}}{\mathcal{F}_{n-1}}\dd q_n(\theta)\\
&\qquad\qquad\qquad= \int\frac{\EEcc{e^{\eta\inner{X_t}{\theta - 
\theta^\star}Y_t}}{\mathcal{F}_{n-1}}}{\exp\left(\psi\big(\inner{\theta^\star}{X_t} + 
\eta\inner{\theta-\theta^\star}{X_t}\big) - \psi\big(\inner{\theta^\star}{X_t}\big)\right)}\dd q_n(\theta)= 1\,,
\end{align*}
where we used that for an exponential family, the equality 
$$\log\mathbb{E}[e^{\beta Y_t}|\mathcal{F}_{t-1}] = \psi(\beta + 
\inner{\theta^\star}{X_t}) - \psi(\inner{\theta^\star}{X_t})$$ holds for any $\mathcal{F}_{t-1}$-measurable $\beta$. 
So, we have proved that for any $t$, we have
\begin{equation*}
\mathbb{E}[M_t|\mathcal{F}_{t-1}] = M_{t-1}\mathbb{E}[\exp(\ell_t(\theta^\star) - 
\shLoss_{t}(q_t))|\mathcal{F}_{t-1}] =  M_{t-1}\,.
\end{equation*} with $M_t = \exp\left(\sum_{s=1}^{t}\ell_s(\theta^\star) - 
\sum_{s=1}^{n}\shLoss_{s}(q_s)\right)$. 
This yields that $M_t$ is a non-negative martingale, and the desired result follows by Ville's inequality.
\end{proof}
Using the above result, we will derive confidence sets in terms of the function
\begin{equation*}
d_\psi(z, z') = \frac{1}{2}\psi(z) + \frac{1}{2}\psi(z') - \psi(z/2 + z'/2).
\end{equation*}
For any $\psi$, $d_\psi$ is symmetric and satisfies $d_\psi(z,z) = 0$ for every $z$. Thanks to the convexity of $\psi$, 
$d_\psi$ is nonnegative (and also positive definite if $\psi$ is strictly convex, i.e., $d_\psi(z, z') \neq 0$ when $z 
\neq z'$). However, in general, $d_\psi$ is not a proper distance because it does not satisfy the triangle inequality. 
Loosely speaking, the rate at which $d_\psi(z, z')$ increases as $z'$ moves further away from $z$ is determined by the 
curvature of $\psi$. 
Our main result of this section provides a confidence set for $\thetastar$ based on the sequence 
$q_1,\dots,q_n$ output by an online learning algorithm, in terms of the function $d_\psi$.

\begin{theorem}\label{thm:algorithmic_general}
Let $\theta^n=\pa{\theta_1,\dots,\theta_n}$ be a sequence of parameters in $\Theta$ such that each $\theta_t$ is 
$\F_{t-1}$-measurable. 
Then, for any $\delta\in(0,1)$,
$$
\PP{\exists n \geq 1\,:\; 
\sum_{t=1}^n d_{\psi}(\inner{\theta_t}{X_t}, \inner{\theta^\star}{X_t}) \leq 
\frac 12 \regret_{\theta^n}(\theta^\star) + \log\frac{1}{\delta}} \le \delta\,.
$$
\end{theorem}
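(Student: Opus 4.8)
The plan is to read the statement off \Cref{prop:shifted_lbd}, specialised to the shift $\eta=\tfrac12$ and to the deterministic forecaster whose mixture $q_t$ concentrates on $\theta_t$. For such a point-mass forecaster the shifted mixture loss collapses to a single evaluation, $\shLoss_t(q_t)=\ell_t\bpa{\tfrac12\theta_t+\tfrac12\thetastar}$, so the proposition controls, uniformly over $n$, the size of $\sum_{t=1}^n\bpa{\ell_t(\thetastar)-\ell_t\bpa{\tfrac12\theta_t+\tfrac12\thetastar}}$. The whole argument then reduces to rewriting this shifted excess loss as a combination of the divergence sum $\sum_t d_{\psi}$ and the regret $\regret_{\theta^n}(\thetastar)$.

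The crux is a purely algebraic identity that I would establish round by round. Writing $z_t=\inner{\theta_t}{X_t}$ and $z_t^\star=\inner{\thetastar}{X_t}$ and expanding $\ell_t(\theta)=-\inner{\theta}{X_t}Y_t+\psi(\inner{\theta}{X_t})-\log h(Y_t)$, the reference term $\log h(Y_t)$ drops out and the per-round shifted excess loss becomes $\ell_t(\thetastar)-\ell_t\bpa{\tfrac12\theta_t+\tfrac12\thetastar}=\tfrac12(z_t-z_t^\star)Y_t+\psi(z_t^\star)-\psi\bpa{\tfrac{z_t+z_t^\star}{2}}$. Adding half of the per-round regret, $\tfrac12\bpa{\ell_t(\theta_t)-\ell_t(\thetastar)}=\tfrac12(z_t^\star-z_t)Y_t+\tfrac12\psi(z_t)-\tfrac12\psi(z_t^\star)$, the two $Y_t$-linear terms cancel exactly---this cancellation is the one place where $\eta=\tfrac12$ is indispensable---and the surviving terms are $\tfrac12\psi(z_t)+\tfrac12\psi(z_t^\star)-\psi\bpa{\tfrac{z_t+z_t^\star}{2}}=d_{\psi}(z_t,z_t^\star)$. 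Summing over $t$ gives
$$\sum_{t=1}^n d_{\psi}\pa{\inner{\theta_t}{X_t},\inner{\thetastar}{X_t}}=\tfrac12\,\regret_{\theta^n}(\thetastar)+\sum_{t=1}^n\bpa{\ell_t(\thetastar)-\shLoss_t(q_t)}\,.$$

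With this identity the conclusion is immediate. \Cref{prop:shifted_lbd} with $\eta=\tfrac12$ guarantees that, with probability at least $1-\delta$, the last sum stays below $\log\tfrac1\delta$ simultaneously for all $n$; substituting into the identity, this says that with probability at least $1-\delta$ the divergence sum satisfies $\sum_{t=1}^n d_{\psi}\pa{\inner{\theta_t}{X_t},\inner{\thetastar}{X_t}}\le\tfrac12\regret_{\theta^n}(\thetastar)+\log\tfrac1\delta$ for every $n$. Passing to the complementary event is exactly the displayed probability bound. I expect the sole genuine obstacle to be the bookkeeping inside the identity: one must verify that the $Y_t$-linear contributions cancel and that the residual is the \emph{symmetric} quantity $d_{\psi}$ rather than a one-sided Bregman term, since a slip there would corrupt the factor $\tfrac12$ in front of the regret. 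Everything else is a direct appeal to \Cref{prop:shifted_lbd}, whose martingale is tailored to precisely this purpose.
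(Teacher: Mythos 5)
Your proof is correct and takes essentially the same route as the paper's: the identity $d_{\psi}\pa{\inner{\theta_t}{X_t},\inner{\thetastar}{X_t}} = \frac 12\bpa{\ell_t(\theta_t)-\ell_t(\thetastar)} + \bpa{\ell_t(\thetastar)-\loss_t^{(1/2)}(\theta_t)}$, summed over $t$ and combined with \Cref{prop:shifted_lbd} at $\eta=\tfrac 12$ with point masses $q_t=\delta_{\theta_t}$. If anything you are more careful than the paper, which asserts the algebraic identity without expanding it and mis-cites \Cref{prop:generic_lbd} where \Cref{prop:shifted_lbd} is clearly intended (the former has no parameter $\eta$).
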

\begin{proof}
Let us introduce the shorthand $D_{t}(\theta) = \frac 12 \ell_t(\theta) + 
 \frac 12 \ell_t(\theta^\star) - \loss_{t}^{(1/2)}(\theta)$, and 
observe that $D_{t}(\theta) = d_{\psi}(\inner{\theta}{X_t}, \inner{\theta^\star}{X_t})$.
Then, we have 
\begin{align*}
\sum_{t=1}^n D_t(\theta_t) = \frac 12 \sum_{t=1}^n \pa{\loss_{t}(\theta_t) - \loss_t(\theta^*)} + 
\sum_{t=1}^n \pa{\loss_{t}(\theta^*) - \loss_t^{(1/2)}(\theta_t)}
 \end{align*}
The claim then follows from applying \Cref{prop:generic_lbd} with $\eta = 1/2$.
\end{proof}
Whereas the previously presented constructions use \emph{either} the predictions of an online 
learning algorithm (Section~\ref{sec:likelihood-ratio-method}) \emph{or} a regret bound (Section~\ref{sec:analytic}), 
this one uses a bound on the regret \emph{and} the predictions of an online learning algorithm. In 
return, the size of the confidence sets is determined by the regret against $\theta^{\star}$ rather than the regret 
against $\hat{\theta}_n$, which can be advantageous when, for instance, $\theta^{\star}$ is known to enjoy some 
additional structure (e.g., sparsity).

Supposing that the online algorithm comes with a regret bound of the form $\regret_{\theta^n}(\theta) \le 
B_n(\theta)$, the result above implies that the following is a valid confidence sequence:
\begin{equation}\label{eq:algorithmic_set}
\Theta_n = \left\{\theta \in \mathbb{R}^d\;:\; \sum_{t=1}^n d_{\psi}(\inner{\theta_t}{X_t}, \inner{\theta}{X_t}) 
\leq 
\frac 12 B_{n}(\theta) + \log\frac{1}{\delta}\right\}\,.
\end{equation}
If a tight upper bound on $\regret_{\theta^n}(\thetastar)$ is known, the comparator-specific 
$B_{n}(\theta)$ term can be replaced by said bound while maintaining the validity of the confidence sequence.
Unfortunately, $d_\psi$ is not always a convex function, which means that the sets in \Cref{thm:algorithmic_general} 
are not always convex sets. If $\psi$ is strongly convex, this can be exploited to construct convex confidence 
ellipsoids that contain the sets in \Cref{eq:algorithmic_set}, though they may be substantially larger. We provide 
concrete 
examples (as well as a tighter confidence set of a similar shape) in Section~\ref{sec:algorithmic}.

\section{Regret bounds}\label{sec:regret}
In order to derive meaningful confidence sets from the framework described in the previous section, we need to 
exhibit sequential prediction algorithms that come with guaranteed bounds on their regret. Recall that our main result 
(\Cref{thm:main-reduction}) only requires demonstrating the \emph{existence} of such methods, without concern 
for computational aspects. This allows us to consider algorithms that are generally hard (or impossible) to implement 
efficiently. For this reason, we will focus on variants of two specific algorithms that come with tight regret 
bounds, but are not necessarily easily implementable: the exponentially weighted average (EWA) and normalized maximum 
likelihood (NML) forecasters. We formally introduce these methods in the sections below, as 
well as provide bounds on their regret, stated in rather general terms. Some results will concern the special case of 
\emph{transductive online learning}, meaning that the sequence of covariates $X_1,X_2,\dots,X_n$ is chosen obliviously 
of the predictions made by the online learner, and that this sequence is known ahead of time. All bounds we provide 
in this section are guaranteed to hold with probability $1$.
We will instantiate these bounds in \Cref{sec:cases} below to provide some concrete confidence sets for 
GLMs. For further reading on sequential prediction with the log loss, we refer to the excellent book of 
\citet{cesa2006prediction} (and particularly Chapter~9 therein).

\subsection{The Exponentially Weighted Average Forecaster} \label{sec:EWA}
One of the most fundamental algorithms for sequential 
probability assignment is the \emph{exponentially weighted average} (EWA) forecaster, first proposed and studied by 
\citet{Vov90} (with later developments by \citealp{LW94,FS97} and numerous further applications throughout all of 
online learning, cf.~\citealp{cesa2006prediction}). This strategy is a mixture forecaster, which takes as 
input a \emph{prior} $q_1$ over the parameter 
space $\Theta$, and then produces each subsequent mixture for $t=2,\dots,n$ according to the update rule
\begin{equation*}
\frac{\dd q_t}{\dd q_1}(\theta) = \frac{e^{-\lambda\sum_{s=1}^{t-1}\ell_s(\theta)}}{\int 
e^{-\lambda\sum_{s=1}^{t-1}\ell_s} \dd q_1}.
\end{equation*}
Here, the parameter $\lambda > 0$ can be interpreted as a \emph{learning rate} (or \emph{stepsize}). The EWA forecaster 
comes with guarantees on its \emph{scaled log loss} with scale parameter $\lambda$ defined for each mixture 
$q\in\Delta_{\Theta}$ as
\[
\mathcal{L}_{t, \lambda}(q) = -\frac{1}{\lambda}\log\left(\int e^{-\lambda\ell_t(\theta)}\dd q(\theta)\right)\,,
\]
with the scaled regret defined as $\regretql{\lambda}(\bar{\theta}) = \sum_{t=1}^n \pa{\mathcal{L}_{t, \lambda}(q_t) 
-\loss_t(\bar{\theta})}$. 
Indeed, through a standard 
telescoping sum argument (cf. \Cref{lem:telescope}), one can show that for any comparator $\bar{\theta}$ and any $\lambda > 0$, the 
regret of the EWA strategy satisfies
\begin{equation}
\regretql{\lambda}(\bar{\theta}) = -\frac{1}{\lambda}\log\left(\int e^{-\lambda\sum_{t=1}^{n}\big(\ell_t(\theta) - 
\ell_t(\bar{\theta})\big)}\dd q_1(\theta)\right).\label{eqn:ewa_regret}
\end{equation}

There are several ways in which one can turn the above generic bound into concrete, quantitative guarantees. A result 
we will repeatedly use 
provides an upper bound in terms of a quantity we call the \emph{Bregman information gain}, defined in terms of the 
so-called Bregman divergence associated 
with the log-likelihood of the GLM we study. For a precise definition, let $\rho: \mathbb{R}^d \to \mathbb{R}$ be any 
convex and differentiable function such that $\int\exp(-\rho(\theta))\mathrm{d}\theta < \infty$ and define the 
regularized cumulative loss 
\[
Z_{n,\lambda}^{\rho}(\theta) = 
\lambda\sum_{t=1}^{n}\ell_t(\theta) + \rho(\theta)\,.
\]
For any convex and  differentiable function $f: \mathbb{R}^d \to 
\mathbb{R}$, define the Bregman divergence as
\begin{equation*}
\mathcal{B}_f(\theta, \theta^{\prime}) = f(\theta) - f(\theta^{\prime}) - \inner{\theta - \theta^{\prime}}{\nabla 
f(\theta^{\prime})}\,,
\end{equation*}
which is the difference between $f(\theta)$ and the first-order Taylor approximation of $f$ around $\theta'$. Then, we 
define the Bregman information gain as
\begin{equation}\label{eq:breg}
\gamma_{n,\lambda}^{\rho} = -\log\left(\frac{\int\exp(-\mathcal{B}_{Z_{n,\lambda}^{\rho}}(\theta, 
\wh{\theta}_{n,\lambda}))\mathrm{d}\theta}{\int\exp(-\rho(\theta))\mathrm{d}\theta}\right)\,,
\end{equation}
where $\wh{\theta}_{n,\lambda} \in \argmin_{\theta}Z_{n,\lambda}^\rho(\theta)$, assuming this exists 
(which will be the case in all applications we consider). The term ``Bregman information gain'' is borrowed from 
\citet{bregman2023}, who justify this naming convention by the observation that in the case of linear models, this 
quantity is equal to the mutual information between the function values $\inner{\theta^{\star}}{X_1}, \dots, 
\inner{\theta^{\star}}{X_n}$ and the labels $Y^n$, which is often thought of as a measure of ``information gain'' about 
the true parameter $\theta^{\star}$ given the observed samples in a Bayesian model.

The following proposition provides an upper bound on the regret of the EWA forecaster in terms of the Bregman 
information 
gain, when using the prior $q_1 \propto e^{-\rho}$.
\begin{proposition}\label{lemma:regrewa}
For all comparators $\bar{\theta}$ such that $\rho(\bar{\theta}) < \infty$, the regret of the EWA forecaster with prior 
density $q_1\propto e^{-\rho}$ satisfies
\begin{equation*}
\regretql{\lambda}(\bar{\theta}) \leq \frac{\rho(\bar{\theta}) + \gamma_{n,\lambda}^{\rho}}{\lambda}\,.
\end{equation*}
\end{proposition}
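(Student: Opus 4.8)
The plan is to start from the exact expression for the scaled regret of the EWA forecaster given in Equation~\eqref{eqn:ewa_regret}, substitute the explicit prior $q_1 \propto e^{-\rho}$, and then recognize the regularized cumulative loss $Z_{n,\lambda}^{\rho}$ inside the resulting integral. Concretely, writing $\dd q_1(\theta) = e^{-\rho(\theta)}\dd\theta \,/ \int e^{-\rho}$ and pulling the $\theta$-independent factor $e^{\lambda\sum_t \ell_t(\bar{\theta})}$ out of the logarithm, the regret becomes
\[
\regretql{\lambda}(\bar{\theta}) = -\frac{1}{\lambda}\log\pa{e^{\lambda\sum_{t=1}^n \ell_t(\bar{\theta})}\,\frac{\int e^{-Z_{n,\lambda}^{\rho}(\theta)}\dd\theta}{\int e^{-\rho(\theta)}\dd\theta}}\,,
\]
using that $\lambda\sum_t \ell_t(\theta) + \rho(\theta) = Z_{n,\lambda}^{\rho}(\theta)$ by definition.

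The key step is to rewrite the numerator integral in terms of the Bregman information gain. Since $\wh{\theta}_{n,\lambda}$ minimizes $Z_{n,\lambda}^{\rho}$, the first-order optimality condition gives $\nabla Z_{n,\lambda}^{\rho}(\wh{\theta}_{n,\lambda}) = 0$, so the definition of the Bregman divergence collapses to $\mathcal{B}_{Z_{n,\lambda}^{\rho}}(\theta, \wh{\theta}_{n,\lambda}) = Z_{n,\lambda}^{\rho}(\theta) - Z_{n,\lambda}^{\rho}(\wh{\theta}_{n,\lambda})$. Substituting this identity and recalling the definition \eqref{eq:breg} of $\gamma_{n,\lambda}^{\rho}$, I would obtain
\[
\frac{\int e^{-Z_{n,\lambda}^{\rho}(\theta)}\dd\theta}{\int e^{-\rho(\theta)}\dd\theta} = e^{-Z_{n,\lambda}^{\rho}(\wh{\theta}_{n,\lambda})}\,e^{-\gamma_{n,\lambda}^{\rho}}\,,
\]
which, fed back into the regret expression, yields the exact identity
\[
\regretql{\lambda}(\bar{\theta}) = \frac{1}{\lambda}\pa{Z_{n,\lambda}^{\rho}(\wh{\theta}_{n,\lambda}) + \gamma_{n,\lambda}^{\rho}} - \sum_{t=1}^n \ell_t(\bar{\theta})\,.
\]

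The final step is a one-line bound using optimality of the minimizer: $Z_{n,\lambda}^{\rho}(\wh{\theta}_{n,\lambda}) \le Z_{n,\lambda}^{\rho}(\bar{\theta}) = \lambda\sum_t \ell_t(\bar{\theta}) + \rho(\bar{\theta})$, so the $\sum_t \ell_t(\bar{\theta})$ terms cancel and the claimed bound $\tfrac{\rho(\bar{\theta})+\gamma_{n,\lambda}^{\rho}}{\lambda}$ follows. I do not anticipate a serious obstacle: the only points requiring care are the assumption that the minimizer $\wh{\theta}_{n,\lambda}$ exists and is interior (so that $\nabla Z_{n,\lambda}^{\rho}$ vanishes there), which is granted in the statement, and the finiteness of the normalizing integrals, which holds since $\int e^{-\rho} < \infty$ by assumption while the convex losses $\ell_t$ only sharpen the integrand. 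The genuine content is the Bregman-divergence rewriting in the second step; everything else is bookkeeping.
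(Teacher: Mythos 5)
Your proof is correct and follows essentially the same route as the paper's: both start from the identity \eqref{eqn:ewa_regret}, substitute the prior $q_1 \propto e^{-\rho}$ to surface $Z_{n,\lambda}^{\rho}$, use that the Bregman divergence collapses to $Z_{n,\lambda}^{\rho}(\theta) - Z_{n,\lambda}^{\rho}(\wh{\theta}_{n,\lambda})$ at the minimizer, and invoke optimality of $\wh{\theta}_{n,\lambda}$ exactly once. The only difference is cosmetic: you derive an exact identity first and apply the inequality $Z_{n,\lambda}^{\rho}(\wh{\theta}_{n,\lambda}) \le Z_{n,\lambda}^{\rho}(\bar{\theta})$ at the end, whereas the paper applies the same inequality inside the logarithm midway through the chain.
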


\begin{proof}
Recalling the expression \eqref{eqn:ewa_regret} of the regret of the EWA forecaster, we write
\begin{align*}
\regretql{\lambda}(\bar{\theta}) &= -\frac{1}{\lambda}\log\left(\int\exp\left(-\lambda\sum_{t=1}^{n}\ell_t(\theta) + 
\lambda\sum_{t=1}^{n}\ell_t(\bar{\theta})\right)\dd q_1(\theta)\right)\\
&= -\frac{1}{\lambda}\log\left(\frac{\int\exp\left(-Z_{n,\lambda}^{\rho}(\theta) + 
 Z_{n,\lambda}^{\rho}(\bar{\theta})\right)\mathrm{d}\theta}{\int\exp\left(-\rho(\theta)\right)\mathrm{d}\theta}\right) 
+ \frac{\rho(\bar{\theta})}{\lambda}\\
&\leq -\frac{1}{\lambda}\log\left(\frac{\int\exp\left(-Z_{n,\lambda}^{\rho}(\theta) + 
Z_{n,\lambda}^{\rho}(\wh{\theta}_{n,\lambda})\right)\mathrm{d}\theta}{\int\exp\left(-\rho(\theta)\right)\mathrm{d}
\theta}\right) + \frac{\rho(\bar{\theta})}{\lambda}\\
&= -\frac{1}{\lambda}\log\left(\frac{\int\exp\left(-\mathcal{B}_{Z_{n,\lambda}^{\rho}}(\theta, 
\wh{\theta}_{n,\lambda})\right)\mathrm{d}\theta}{\int\exp\left(-\rho(\theta)\right)\mathrm{d}\theta}\right) + 
\frac{\rho(\bar{\theta})}{\lambda} = \frac{\gamma_{n,\lambda}^{\rho} + \rho(\bar{\theta})}{\lambda}\,,
\end{align*}
where we used that $Z_{n,\lambda}^\rho(\theta)-Z_{n,\lambda}^\rho(\wh\theta_{n,\lambda}) = 
\Bb_{Z_{n,\lambda}^\rho}(\theta,\wh\theta_{n,\lambda})$, which holds since $\wh\theta_{n,\lambda}$ minimizes 
$Z_{n,\lambda}^\rho$.
\end{proof}
Under mild conditions, the Bregman information gain (and hence this regret bound) grows asymptotically as 
$d\log n$ in the worst case \citep{grunwald2007minimum}. As our focus is on finite-sample bounds, we will derive more 
precise bounds in the forthcoming sections, under concrete assumptions about the likelihood, the parameter 
$\thetastar$ and the sequence of covariates. 
Naturally, the best results are achieved when picking the regularizer $\rho$ (and thus the prior $q_1$) in a way that 
utilizes the problem structure at hand effectively. For instance, $\rho$ may depend on the smoothness of the 
log-likelihood function, the sequence of covariates $X_1,\dots,X_n$ (if these are known in advance) or even 
$\thetastar$. Our applications in \Cref{sec:cases} will make use of a number of such problem-dependent 
choices.

We provide below an additional result: a variation of EWA using a subset-selection prior inspired by \citet{alquier2011pac}, which enjoys regret guarantees that are adaptive to the \emph{sparsity} of the comparator, improving the worst-case dependence of order $d \log n$ to $s \log n$ in the special case where the comparator $\btheta$ is $s$-sparse. We define a distribution $\pi$ over subsets $S \subseteq [d]$ by
\begin{equation*}
\pi(S) = \frac{2^{-|S|}}{\binom{d}{|S|}\sum_{s=0}^{d}2^{-s}}\,.
\end{equation*}
For each subset $S \subseteq [d]$, we let $\Theta_S = \{\theta \in \mathbb{R}^d: \mathrm{supp}(\theta) \subseteq S\}$ and define the probability measure $q_S \in \Delta_{\Theta}$, which has support contained in $\Theta_{S}$ and whose density w.r.t.\ the Lebesgue measure on $\Theta_S$ is $q_S(\theta) = \frac{\exp(-\rho(\theta))}{\int_{\Theta_S}\exp(-\rho(\theta))\mathrm{d}\theta}$. Finally, we construct our prior as $q_1 = \sum_{S \subseteq [d]}\pi(S) q_S$. Our regret bound for the EWA forecaster with this prior depends on a quantity we call the \emph{restricted Bregman information gain}. For any subset $S \subseteq [d]$, we let $\wh\theta_{n,\lambda,S} = \argmin_{\theta \in \Theta_S}\{Z_{n,\lambda}^{\rho}(\theta)\}$. We then define the restricted Bregman information gain as
\begin{equation}
\gamma_{n,\lambda}^{\rho, S} = -\log\left(\frac{\int_{\Theta_S}\exp(-Z_{n,\lambda}^{\rho}(\theta) + Z_{n,\lambda}^{\rho}(\wh\theta_{n,\lambda,S}))\mathrm{d}\theta}{\int_{\Theta_S}\exp(-\rho(\theta)\mathrm{d}\theta}\right)\,.\label{eqn:res_breg_info}
\end{equation}
We refer the curious reader to \Cref{sec:res_breg_info}, where we show that $\gamma_{n,\lambda}^{\rho, S}$ can be 
equivalently defined in terms of a Bregman divergence associated with the log-likelihood (which not only justifies 
the notation and choice of name, but will also come handy when putting this result to use in 
Section~\ref{sec:cases_algorithmic}). For any comparator $\bar{\theta}$, with support $\bar{S}$, \Cref{lem:sparse} 
provides an upper bound on the regret of the 
EWA forecaster that depends on the restricted Bregman information gain $\gamma_{n,\lambda}^{\rho,\bar{S}}$.

\begin{proposition}\label{lem:sparse}
Let $\bar{S} = \supp(\bar{\theta})$. For all comparators $\bar{\theta}$ such that $\rho(\bar{\theta}) < \infty$, the regret of the EWA forecaster with prior $q_1 = \sum_{S\subseteq [d]} \pi(S) q_S$ satisfies
\begin{equation*}
\regretql{\lambda}(\bar{\theta}) \leq \frac{1}{\lambda}\left(\gamma_{n,\lambda}^{\rho,\bar{S}} + \rho(\bar{\theta}) + |\bar{S}|\log(2ed/|\bar{S}|) 
+ \log 2\right)\,.
\end{equation*}
\end{proposition}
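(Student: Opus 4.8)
The plan is to mirror the proof of \Cref{lemma:regrewa}, starting from the closed-form regret identity \eqref{eqn:ewa_regret}, but now exploiting the structure of the subset-selection prior $q_1 = \sum_{S\subseteq[d]}\pi(S)q_S$. First I would write $\regretql{\lambda}(\bar{\theta}) = -\frac{1}{\lambda}\log\bigl(\int e^{-\lambda\sum_t(\ell_t(\theta)-\ell_t(\bar{\theta}))}\dd q_1(\theta)\bigr)$ and substitute the mixture form of $q_1$, so that the integral becomes a convex combination $\sum_S \pi(S)\int e^{-\lambda\sum_t(\ell_t(\theta)-\ell_t(\bar{\theta}))}\dd q_S(\theta)$ of nonnegative terms. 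Since only a lower bound on this quantity is needed to produce an upper bound on the regret, I would discard every summand except the one indexed by $S = \bar{S}$, yielding $\regretql{\lambda}(\bar{\theta}) \le -\frac{1}{\lambda}\log\pi(\bar{S}) - \frac{1}{\lambda}\log\bigl(\int e^{-\lambda\sum_t(\ell_t(\theta)-\ell_t(\bar{\theta}))}\dd q_{\bar{S}}(\theta)\bigr)$.

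The second step is to identify the $\bar{S}$-integral with the restricted Bregman information gain. Unfolding the density of $q_{\bar S}$ and recalling $Z_{n,\lambda}^{\rho}(\theta) = \lambda\sum_t\ell_t(\theta) + \rho(\theta)$, the integral equals $\frac{\int_{\Theta_{\bar S}} e^{-Z_{n,\lambda}^\rho(\theta)}\dd\theta}{\int_{\Theta_{\bar S}} e^{-\rho(\theta)}\dd\theta}\, e^{\lambda\sum_t\ell_t(\bar\theta)}$. Comparing with the definition \eqref{eqn:res_breg_info} of $\gamma_{n,\lambda}^{\rho,\bar S}$ gives $-\frac{1}{\lambda}\log(\cdots) = \frac{1}{\lambda}\bigl(\gamma_{n,\lambda}^{\rho,\bar S} + Z_{n,\lambda}^\rho(\wh\theta_{n,\lambda,\bar S}) - \lambda\sum_t\ell_t(\bar\theta)\bigr)$, and since $\lambda\sum_t\ell_t(\bar\theta) = Z_{n,\lambda}^\rho(\bar\theta) - \rho(\bar\theta)$ this simplifies to $\frac{1}{\lambda}(\gamma_{n,\lambda}^{\rho,\bar S} + \rho(\bar\theta)) - \frac{1}{\lambda}(Z_{n,\lambda}^\rho(\bar\theta) - Z_{n,\lambda}^\rho(\wh\theta_{n,\lambda,\bar S}))$. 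The crucial observation here is that $\bar\theta \in \Theta_{\bar S}$ (because $\supp(\bar\theta)=\bar S$) while $\wh\theta_{n,\lambda,\bar S}$ minimizes $Z_{n,\lambda}^\rho$ over $\Theta_{\bar S}$, so the last parenthesis is nonnegative and may be dropped, leaving the bound $\frac{1}{\lambda}(\gamma_{n,\lambda}^{\rho,\bar S} + \rho(\bar\theta))$.

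Finally I would control the prior penalty $-\log\pi(\bar S) = |\bar S|\log 2 + \log\binom{d}{|\bar S|} + \log\bigl(\sum_{s=0}^d 2^{-s}\bigr)$. The normalizing sum satisfies $\sum_{s=0}^d 2^{-s} < 2$, contributing at most $\log 2$, and the standard binomial estimate $\binom{d}{k}\le (ed/k)^k$ gives $\log\binom{d}{|\bar S|}\le |\bar S|\log(ed/|\bar S|)$. Combining these yields $-\log\pi(\bar S) \le |\bar S|\log(2ed/|\bar S|) + \log 2$, and adding this to the bound from the previous paragraph produces exactly the claimed inequality.

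I do not anticipate a genuine obstacle: the argument is essentially the same normalization computation as in \Cref{lemma:regrewa}, with the single-term restriction of the mixture replacing the direct use of $\gamma_{n,\lambda}^\rho$. The only point requiring care is the bookkeeping that matches the $\bar S$-integral to the definition \eqref{eqn:res_breg_info}, in particular verifying that $\bar\theta$ lies in $\Theta_{\bar S}$ so that the minimizer inequality $Z_{n,\lambda}^\rho(\bar\theta)\ge Z_{n,\lambda}^\rho(\wh\theta_{n,\lambda,\bar S})$ applies and the excess term can be discarded.
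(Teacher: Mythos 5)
Your proposal is correct and follows essentially the same route as the paper's proof: restricting the mixture to the single summand $S=\bar S$ (the paper's $\sum_S \pi(S)f(S)\ge \pi(\bar S)f(\bar S)$ step), converting the $\bar S$-integral into $\gamma_{n,\lambda}^{\rho,\bar S}$ via the minimizer inequality $Z_{n,\lambda}^{\rho}(\bar\theta)\ge Z_{n,\lambda}^{\rho}(\wh\theta_{n,\lambda,\bar S})$, and bounding $-\log\pi(\bar S)$ with $\binom{d}{k}\le (ed/k)^k$. The only cosmetic difference is that you state the Bregman-gain step as an exact identity and then discard the nonnegative excess term, where the paper folds this into a single inequality.
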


To our knowledge, this concrete regret bound has not appeared in previous literature, although it bears a strong 
similarity with a previous result by \citet{gerchinovitz2013sparsity}. Their method is also based on an application of 
the EWA forecaster, although with a different sparsity-inducing prior inspired by \citet{dalalyan2008aggregation,dalalyan2012mirror}.  We provide further comparisons between the two methods in 
Section~\ref{sec:ewa_algorithmic} where we instantiate our regret bound in the context of confidence sets for sparse 
models.

\begin{proof}
The proof is almost the same as the proof of \Cref{lemma:regrewa}. Starting again from  \eqref{eqn:ewa_regret}, we have
\begin{align*}
\regretql{\lambda}(\bar{\theta}) &= -\frac{1}{\lambda}\log\left(\int\exp\left(-\lambda\sum_{t=1}^{n}\ell_t(\theta) + \lambda\sum_{t=1}^{n}\ell_t(\bar{\theta})\right)\dd q_1(\theta)\right)\\
&= -\frac{1}{\lambda}\log\left(\sum_{S \subseteq [d]}\pi(S)\frac{\int_{\Theta_S}\exp\left(-Z_{n,\lambda}^{\rho}(\theta) + Z_{n,\lambda}^{\rho}(\bar{\theta}) - \rho(\bar{\theta})\right)\dd\theta}{\int_{\Theta_S}\exp\left(-\rho(\theta)\right)\dd\theta}\right)\\
&\leq -\frac{1}{\lambda}\log\left(\frac{\int_{\Theta_{\bar S}}\exp\left(-Z_{n,\lambda}^{\rho}(\theta) + Z_{n,\lambda}^{\rho}(\bar{\theta})\right)\dd\theta}{\int_{\Theta_{\bar S}}\exp\left(-\rho(\theta)\right)\dd\theta}\right) + \frac{\rho(\bar{\theta}) + \log\frac{1}{\pi(\bar{S})}}{\lambda}\\
&\leq \frac{\gamma_{n,\lambda}^{\rho, \bar S} + \rho(\bar\theta) + \log\frac{1}{\pi(\bar S)}}{\lambda}\,,
\end{align*}
where the first inequality follows from the fact that for any non-negative mapping $f$ we have $\sum_S \pi(S)f(S)\geq 
\pi(\bar S) f(\bar S)$. Now we are just left with controlling $\log\frac{1}{\pi(\bar S)}$. By the construction of 
$\pi$, we have $\frac{1}{\pi(\bar S)} \leq  \binom{d}{\bar S}2^{1+|\bar S|}$, from which the claim follows by using that 
$\binom{d}{s} \leq (ed/s)^s$.
\end{proof}

\subsection{Normalized Maximum Likelihood}\label{sec:NML}
The second family of methods we will consider is that of \emph{Normalized Maximum Likelihood} (NML) forecasters, first 
proposed by \citet{Shtarkov1987,Rissanen1996} and later studied extensively in a long sequence of works including  
\citet{BarrenRissanenYu1998,TakeuchiB97,XieB00,CesaBianchiLugosi2001,LianFengBarron2006,pmlr-v30-Bartlett13,
GrunwaldHarremoes2009,pmlr-v98-grunwald19a,jacquet2022precise},
and in 
particular the excellent book of \citet{grunwald2007minimum}.
Rather than playing distributions on $\Theta$ in each round, NML forecasters work directly with distributions on 
$\mathcal{Y}$ in each round. We recall that for a generic $p\in\Delta_\mathcal Y$, the log loss is defined as 
$\mathcal{L}_t(p) = -\log(p(Y_t))$, and the regret associated with a sequence $\pn = p_1, \dots, p_n$ as
\begin{equation*}
\regret_{\pn}(\bar{\theta}) = \sum_{t=1}^{n}\mathcal{L}_t(p_t) - \sum_{t=1}^{n}-\log(p(Y_t|X_t, \bar{\theta}))\,.
\end{equation*}

The standard Normalized Maximum Likelihood (NML) forecaster \citep{Shtarkov1987} is defined in terms of the joint 
distribution over sequences in $\Y^n$ with density defined as
\begin{equation}
 P_n(y^n) = \frac{\sup_{\theta \in \Theta}\{\prod_{t=1}^{n}p(y_t|X_t, 
\theta)\}}{\int_{\mathcal{Y}^n} \sup_{\theta \in \Theta}\{\prod_{t=1}^{n}p(\wt{y}_t|X_t, 
\theta)\}\dd \wt{y}^n }\,,\label{eqn:nml}
\end{equation}
whenever the normalization constant can be guaranteed to be finite. Since this is typically not the case in the setting 
we consider\footnote{For example, in the case of linear regression with $n=1$,
$p(y_1|X_1,\theta)$ is maximized by any $\theta$ satisfying $\langle \theta, X_t\rangle = y_t$;
as a result, $\sup_{\theta} p(y_1|X_1,\theta) = (2\pi)^{-1/2}$ does not depend on $y_1$, and thus cannot be normalized.}, the standard definition can be modified in a number of ways 
that lead to well-defined probability distributions. Our of the many possibilities\footnote{For alternative techniques 
leading to well-defined distributions, we refer to Chapter~11 of \citealp{grunwald2007minimum}, where this method is 
called ``LNML-2''. For simplicity, we will refer to this method as NML below.}, and in line with the 
spirit of the EWA forecaster described previously, we address this here by introducing a ``prior'' (or sometimes called 
``luckiness function'' in the NML literature) $\rho:\Theta\ra\real$, and considering the following distribution over 
sequences in $\Y^n$:
\begin{equation}
 P_n(y^n) = \frac{\sup_{\theta \in \Theta}\{\prod_{t=1}^{n}p(y_t|X_t, 
\theta)e^{-\rho(\theta)}\}}{\int_{\mathcal{Y}^n} \sup_{\theta \in 
\Theta}\{\prod_{t=1}^{n}p(\wt{y}_t|X_t, 
\theta)e^{-\rho(\theta)}\} \dd \wt{y}^n}\,.\label{eqn:nml2}
\end{equation}
Having defined this joint distribution, the NML strategy consists in playing the conditional distributions extracted 
from $P_n$ via the formula
\begin{equation*}
p_t(y_t) = 
\frac{\int_{\mathcal{Y}^{n-t}}P_n(y^n)\mathrm{d}y_{t+1}\cdots\mathrm{d}y_n}{\int_{\mathcal{Y}^{n-t+1}}P_n(y^n)\mathrm{d} 
y_{t}\cdots\mathrm{d}y_n}\,.
\end{equation*}
Notice that each $p_t$ depends on the entire sequence of covariates $X^n$, which means the NML forecaster as stated is 
only suitable for the transductive setting where all covariates are known ahead of time. 

The denominator in the expression of Equation~\eqref{eqn:nml2} plays a special role in the regret analysis of NML. 
Following the conventions in the literature, we call this quantity the \emph{Shtarkov sum} \citep{grunwald2007minimum}, and denote it by
\begin{equation}\label{eq:shtarkov1}
\mathcal{S}(\Theta, \rho, X^n) = \int_{\mathcal{Y}^n} \sup_{\theta \in \Theta}\left\{\prod_{t=1}^{n}p(y_t|X_t, 
\theta)e^{-\rho(\theta)}\right\} \dd y^n.
\end{equation}
Within the transductive setting, the NML forecaster can be shown to be the unique minimax optimal forecaster for 
 sequential probability assignment, and for any sequence $X^n$, the minimax regret is equal to the 
logarithm of the Shtarkov sum (up to the regularization due to the prior $\rho$). Indeed, the worst-case 
regularized regret of the NML forecaster can be written as
\begin{align*}
\sup_{\theta \in \Theta}\{\regret_{\pn}(\theta)-\rho(\theta)\} &= \sum_{t=1}^{n}\mathcal{L}_t(p_t) + \sup_{\theta 
\in \Theta}\left\{\sum_{t=1}^{n}\log(p(Y_t|X_t, \theta)) - \rho(\theta)\right\}\\
&= \log\left(\frac{1}{p_n(Y^n)}\right) + \log\left(\sup_{\theta \in \Theta}\left\{\prod_{t=1}^{n}p(Y_t|X_t, 
\theta)e^{-\rho(\theta)}\right\}\right)\\
&= \log(\mathcal{S}(\Theta, \rho, X^n))\,.
\end{align*}
We note that $\mathcal{S}(\Theta, \rho, X^n)$ does not depend on $Y^n$, which means that the NML forecaster is an 
\emph{equalizer}: it achieves the same regret on every realization of the labels $Y^n$. Together with the the fact that 
any forecaster induces a probability distribution $p_n$ on $\mathcal{Y}^n$ (and vice versa), this can be used to show 
that the NML forecaster is the unique minimax optimal forecaster in terms of the regularized regret (see, e.g., 
Section~9.4 in \citealp{cesa2006prediction} and Section~11.3 in \citealp{grunwald2007minimum}). There exist variations 
of the standard NML forecaster that are able to deal with adaptively chosen covariates as well, including the Sequential 
Normalized Maximum Likelihood (SNML) forecaster \citep{RoosRissanen08,KotGru11}, and the contextual NML (cNML) forecaster of 
\citet{liu2024sequential} which enjoys a minimax-optimality property similar to the one satisfied by NML. 

While it is generally hard to evaluate the Shtarkov sum in concrete settings of interest, there are some important 
special cases where this can be done and the minimax regret be evaluated. The most prominent example falling into the 
scope of the present paper is that of linear models (i.e., the setting we consider with the choice $\psi: z\mapsto 
z^2/2$). In this setting, the sequence of predictions produced by NML as defined above with $\rho: \theta \mapsto 
\frac{1}{2\gamma^2}\twonorm{\theta}^2$ coincide with those of the EWA forecaster with prior $q_1 \propto e^{-\rho}$ 
(see \citealp{kakade2005worst} and Section~11.3 in \citealp{grunwald2007minimum}). This shows that EWA is minimax 
optimal in the setting where the covariates $X^n$ are known ahead of time. Curiously, EWA requires no prior knowledge of 
the sequence of covariates (and in fact is also equivalent to SNML in this case), implying that there is no gap in 
difficulty of the fixed- and adaptive-design models in this setting. Accordingly, the regret bounds of EWA we will make 
use of below are all minimax optimal for linear models with fixed and obliviously chosen covariates. For several other 
settings, the Shtarkov sum is known to grow asymptotically as $\Theta\bpa{\frac{d}{2}\log n}$ 
\citep{jacquet2022precise}. Since our aim in this paper is to derive explicit finite-sample guarantees, we will not 
instantiate these asymptotic results below. Finally, several of the bounds we provide will show explicit dependence on 
the sequence covariates, leading to rates that are potentially better than minimax in benign cases (e.g., when the 
sequence of covariates does not span the full space).

\section{Applications}\label{sec:cases}
In this section, we instantiate our online-to-confidence-set technique in a number of specific cases of interest. 
Throughout the section, we will assume that the log-partition function of the GLM is \emph{$M$-smooth} in the sense 
that $\psi$ is twice-differentiable with its second derivative satisfying $\psi''(z)\le M$ for some positive $M$ and 
all $z\in\real$. Many GLMs of practical interest satisfy this condition, including the classic linear model $\psi(z) = 
\frac {z^2}{2} $ with $M = 1$ and the logistic model $\psi(z) = \log(1 + e^z)$ with $M=\frac 14$ \footnote{For $\psi(z) 
= \log(1 + e^z)$ and $\mu(z) = 1/(1 + e^{-z}) \in [0,1]$, $\psi^{\prime\prime}(z) = \mu(z)(1 - \mu(z)) \in [0, 
\frac{1}{4}]$.}. We note that this condition is equivalent to assuming that the random variable $Y_t - \EEcc{Y_t}{\mathcal{F}_{t-1}}$ 
is $\sqrt{M}$-subgaussian\footnote{This follows from observing that the centred moment generating function satisfies 
$\EEcc{\exp(\lambda (Y_t-\E[Y_t|\mathcal{F}_{t-1}]))}{\mathcal{F}_{t-1}} = \Bb_\psi(\gamma + \lambda\|\gamma) \le M \lambda^2/2$.}. 

Additionally, some of the results below will also assume that $\psi$ is \emph{$m$-strongly convex} on an interval 
$[-b,b]$, meaning that 
$\psi''(z) \ge m$ holds uniformly for all $z\in[-b,b]$ and some $m \ge 0$ (and some $b>0$). Obviously, $m \le M$ holds 
for all GLMs, and the two are equal if and only if $\psi$ is quadratic (i.e., the GLM is linear). We will sometimes call 
the ratio of the two constants the \emph{condition number}\footnote{This is not to be confused with the condition number 
of the covariance matrix associated with the data, which does not appear in any of our bounds.} of the GLM and 
denote it by $\kappa = M/m$. We highlight that assuming strong convexity of $\psi$ is generally a very strong 
assumption, and the constant $m$ might often scale poorly with problem parameters such as the dimension $d$. For 
instance, for logistic regression, the strong convexity assumption only holds whenever $\Theta$ is compact and the 
covariates are all bounded, and even then $m$ can be exponentially small with $d$. Thus, the guarantees stated below 
without assuming strong convexity are to be regarded as much less restrictive than the ones requiring this condition.

Most results we show below are derived from the analytic conversion scheme presented in 
\Cref{sec:analytic} and the confidence sets have the shape given in Equation~\eqref{eq:GLM_conf_set}. For each 
of these applications, our technique yields the best known upper bounds on the width parameter $\beta_n$. Later 
results also make use of the algorithmic conversion scheme of \Cref{sec:algorithmic}, which also leads to 
improved confidence sets in comparison with previous work. We provide a detailed discussion of the relevant literature 
after stating each result, and point out the concrete improvements explicitly.

\subsection{Analytic conversions} \label{sec:cases_analytic}
To apply the analytic conversion scheme suggested by \Cref{thm:main-reduction}, one needs to find a suitable 
reference point $\btheta_n$ and demonstrate the existence of an algorithm with low regret. In what follows, we consider 
a number of concrete settings, where we demonstrate useful choices of these hyperparameters and present the resulting 
confidence sets. 

\subsubsection{Adaptively chosen covariates}\label{sec:cases_adaptive}
We first consider the most general version of our setting, where each covariate $X_t$ is allowed to depend on the 
previous sequence of outcomes $\pa{X_k,Y_k}_{k < t}$ in an arbitrary fashion. Allowing such dependences is extremely 
important in applications of high practical interest, for instance in sequential decision-making problems such as 
online learning in (generalized) linear bandits or Markov decision processes \citep{lattimore2020bandit}.  
Our main result for this setting is the following theorem.
\begin{theorem}\label{cor:smooth}
Suppose that $\psi$ is $M$-smooth, and fix $\gamma > 0$. Set 
$\rho(\theta) = \frac{\|\theta\|^2_2}{2\gamma^2}$ and let  $\btheta_n = \htheta_n = \argmin_{\theta} 
\sum_{t=1}^n \ell_t(\theta) + \rho(\theta)$. 
Then, for any $\delta\in(0,1)$, the set defined in Equation~\eqref{eq:GLM_conf_set} satisfies $\PP{\exists n: 
\thetastar\not\in \Theta_n} \le \delta$ with the choice
\begin{equation}
\beta_n = \frac{\bnorm{\htheta_n}_2^2}{2\gamma^2} + 
\frac{1}{2}\log{\det\pa{\gamma^2 M\Lambda_n + \Id}} + \log \frac 
1\delta \,.\label{eqn:ewa_conf_width_smooth}
\end{equation}
In particular, if all the covariates are bounded as $\|X_t\|_2\leq L$, we have
\begin{equation}
\beta_n \leq  \frac{\bnorm{\htheta_n}_2^2}{2\gamma^2} + 
\frac{d}{2}\log\left(1 + \frac{\gamma^2 ML^2n}{d}\right) + \log \frac 
1\delta \,.\label{eqn:ewa_regret_smooth_wc}\end{equation}
\label{lemma:regrewa_smooth}
\end{theorem}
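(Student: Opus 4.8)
The plan is to instantiate the analytic conversion of \Cref{thm:main-reduction} using the EWA forecaster with the Gaussian prior $q_1\propto e^{-\rho}$ associated with $\rho(\theta)=\twonorm{\theta}^2/(2\gamma^2)$, run at scale $\lambda=1$ (so that its scaled log loss coincides with the ordinary log loss appearing in $\regretp$, i.e.\ $\regretp=\regretql{1}$ for this mixture forecaster). By \Cref{thm:main-reduction} it suffices to show that $\regretp(\htheta_n)+\log\frac1\delta$ is no larger than the claimed $\beta_n$, since the set $\Theta_n$ in \eqref{eq:GLM_conf_set} only grows as $\beta_n$ increases. Applying \Cref{lemma:regrewa} with $\lambda=1$ and comparator $\btheta=\htheta_n$ gives at once
\begin{equation*}
\regretp(\htheta_n)\le \rho(\htheta_n)+\gamma_{n,1}^\rho=\frac{\bnorm{\htheta_n}_2^2}{2\gamma^2}+\gamma_{n,1}^\rho\,,
\end{equation*}
so the whole argument reduces to upper bounding the Bregman information gain $\gamma_{n,1}^\rho$.

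The heart of the proof is to control $\gamma_{n,1}^\rho$ via $M$-smoothness. Writing $\Lambda_n=\sum_{t=1}^n X_tX_t^\transpose$, I would first compute the Hessian of $Z_{n,1}^\rho(\theta)=\sum_{t=1}^n\ell_t(\theta)+\rho(\theta)$, namely $\nabla^2 Z_{n,1}^\rho(\theta)=\sum_{t=1}^n\psi''(\inner{\theta}{X_t})\,X_tX_t^\transpose+\gamma^{-2}\Id$. Since $\psi''\le M$ pointwise, this is dominated uniformly in $\theta$ by $A:=M\Lambda_n+\gamma^{-2}\Id$. A second-order Taylor expansion with integral remainder then yields $\mathcal B_{Z_{n,1}^\rho}(\theta,\htheta_n)\le \tfrac12(\theta-\htheta_n)^\transpose A(\theta-\htheta_n)$ for every $\theta$, where $\htheta_n=\wh\theta_{n,1}$ is exactly the minimiser entering the definition of $\gamma_{n,1}^\rho$. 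Crucially, an \emph{upper} bound on the Bregman divergence becomes a \emph{lower} bound on $\int e^{-\mathcal B_{Z_{n,1}^\rho}(\theta,\htheta_n)}\dd\theta$, which I evaluate as a Gaussian integral equal to $(2\pi)^{d/2}(\det A)^{-1/2}$. Dividing by $\int e^{-\rho}=(2\pi)^{d/2}\gamma^d$ and using $\det A=\gamma^{-2d}\det(\gamma^2 M\Lambda_n+\Id)$ gives
\begin{equation*}
\gamma_{n,1}^\rho\le \tfrac12\log\det\pa{\gamma^2 M\Lambda_n+\Id}\,,
\end{equation*}
which, combined with the previous display, reproduces exactly the width \eqref{eqn:ewa_conf_width_smooth}.

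For the bounded-covariate corollary \eqref{eqn:ewa_regret_smooth_wc}, I would pass to the eigenvalues $\lambda_1,\dots,\lambda_d$ of $\Lambda_n$, whose sum is $\trace{\Lambda_n}=\sum_{t=1}^n\twonorm{X_t}^2\le nL^2$. Writing $\tfrac12\log\det(\gamma^2 M\Lambda_n+\Id)=\tfrac12\sum_i\log(1+\gamma^2 M\lambda_i)$ and applying Jensen's inequality to the concave map $x\mapsto\log(1+\gamma^2 Mx)$ under this trace constraint yields the stated $\tfrac d2\log(1+\gamma^2 ML^2n/d)$ bound.

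I expect the only genuinely delicate point to be the Bregman-divergence step: one must verify that the uniform Hessian domination $\nabla^2 Z_{n,1}^\rho\preceq A$ is the correct direction to turn into a \emph{lower} bound on the normalising integral (smaller curvature makes $e^{-\mathcal B}$ integrate to more, hence a smaller information gain), and that the comparator used in \Cref{lemma:regrewa} coincides with the regularised MLE $\htheta_n$ so that no first-order cross term survives in the Taylor remainder. Everything else is a routine Gaussian-integral and determinant computation.
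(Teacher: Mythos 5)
Your proposal is correct and takes essentially the same route as the paper: it instantiates \Cref{thm:main-reduction} with the EWA mixture forecaster at $\lambda=1$ and prior $q_1\propto e^{-\rho}$, bounds the regret against $\htheta_n$ via \Cref{lemma:regrewa}, and controls $\gamma_{n,1}^{\rho}$ exactly as in \Cref{lemma:smoothinfo} (Hessian domination $\nabla^2 Z_{n,1}^{\rho}\preceq M\Lambda_n+\gamma^{-2}\Id$, turned into a lower bound on the normalizing Gaussian integral), with your Jensen argument for \eqref{eqn:ewa_regret_smooth_wc} being the paper's arithmetic--geometric mean determinant bound in equivalent form. You also correctly identify the one delicate point, namely that the comparator $\htheta_n$ coincides with the minimizer $\wh{\theta}_{n,1}$ of $Z_{n,1}^{\rho}$, so the Bregman divergence has no first-order term.
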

The second claim is a simple consequence of the first one, by the inequality of arithmetic and geometric means applied 
in the form 
$$\det(\gamma^2 M\Lambda_n + \Id) \leq \left(1 + \frac{\gamma^2 MnL^2}{d}\right)^d.$$
The main claim follows from instantiating the reduction scheme of \Cref{thm:main-reduction} with the online 
learning algorithm choosen as the EWA forecaster described in \Cref{sec:EWA}. Specifically, the proof follows 
immediately from  applying \Cref{lemma:regrewa} along with the following upper bound on the Bregman information gain 
for smooth GLMs.
\begin{lemma}\label{lemma:smoothinfo}
    Suppose that $\psi$ is $M$-smooth, fix $\gamma, \lambda > 0$ and set $\rho(\theta) = 
\frac{\|\theta\|^2_2}{2\gamma^2}$. Letting $\Lambda_n = \sum_{t=1}^n X_tX_t\transpose$, the Bregman 
information gain satisfies
    $$\gamma_{n,\lambda}^{\rho} \leq \frac{1}{2}\log\det\pa{\lambda M\gamma^2\Lambda_n+\Id}\,.$$
\end{lemma}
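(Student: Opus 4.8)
The plan is to exploit the first-order optimality of $\wh\theta_{n,\lambda}$ together with the uniform Hessian bound furnished by $M$-smoothness, reducing the Bregman information gain to a ratio of two Gaussian integrals. First I would observe that since $\wh\theta_{n,\lambda}$ minimizes $Z_{n,\lambda}^{\rho}$, its gradient vanishes there, so the Bregman divergence collapses to a plain difference of function values:
\[
\mathcal{B}_{Z_{n,\lambda}^{\rho}}(\theta, \wh\theta_{n,\lambda}) = Z_{n,\lambda}^{\rho}(\theta) - Z_{n,\lambda}^{\rho}(\wh\theta_{n,\lambda}).
\]

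Next I would compute the Hessian. Using $\ell_t(\theta) = -\inner{\theta}{X_t}Y_t + \psi(\inner{\theta}{X_t}) - \log h(Y_t)$ and $\rho(\theta) = \|\theta\|_2^2/(2\gamma^2)$, one finds
\[
\nabla^2 Z_{n,\lambda}^{\rho}(\theta) = \lambda \sum_{t=1}^n \psi''(\inner{\theta}{X_t}) X_t X_t\transpose + \gamma^{-2}\Id.
\]
The $M$-smoothness assumption $\psi'' \le M$ then yields the uniform upper bound $\nabla^2 Z_{n,\lambda}^{\rho}(\theta) \preceq H$ for \emph{every} $\theta$, where $H := \lambda M \Lambda_n + \gamma^{-2}\Id$.

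The key step is to turn this uniform Hessian bound into a global quadratic majorant of the Bregman divergence. By the integral form of Taylor's theorem around $\wh\theta_{n,\lambda}$ (with vanishing gradient), for every $\theta$,
\[
\mathcal{B}_{Z_{n,\lambda}^{\rho}}(\theta, \wh\theta_{n,\lambda}) \le \frac{1}{2}\inner{\theta - \wh\theta_{n,\lambda}}{H(\theta - \wh\theta_{n,\lambda})}.
\]
Since this holds on all of $\real^d$, exponentiating and integrating gives the Gaussian lower bound $\int \exp(-\mathcal{B}_{Z_{n,\lambda}^{\rho}}(\theta, \wh\theta_{n,\lambda}))\,\dd\theta \ge (2\pi)^{d/2}(\det H)^{-1/2}$, while the denominator is the standard Gaussian normalizer $\int \exp(-\rho(\theta))\,\dd\theta = (2\pi\gamma^2)^{d/2}$.

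Finally I would substitute both into the definition of $\gamma_{n,\lambda}^{\rho}$. The ratio is bounded below by $\gamma^{-d}(\det H)^{-1/2}$, so after taking $-\log$ and using $\gamma^{2d}\det H = \det(\gamma^2 H) = \det(\lambda M \gamma^2 \Lambda_n + \Id)$, the claimed bound follows. I expect the only genuinely delicate point to be the global quadratic majorization: it is essential that the Hessian bound hold for all $\theta$ rather than merely at the optimum, which is precisely what uniform $M$-smoothness provides and what makes the integral inequality valid over the entire space instead of only locally.
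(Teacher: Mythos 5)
Your proof is correct and takes essentially the same route as the paper's: both use $M$-smoothness to bound $\nabla^2 Z_{n,\lambda}^{\rho} \preceq \lambda M\Lambda_n + \gamma^{-2}\Id$ uniformly, majorize the Bregman divergence by the corresponding quadratic, and evaluate the resulting ratio of Gaussian integrals to obtain $\frac{1}{2}\log\det\pa{\lambda M\gamma^2\Lambda_n+\Id}$. Your write-up merely makes explicit the Taylor-remainder argument and Gaussian normalizing constants that the paper leaves implicit.
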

\begin{proof}
    The smoothness of $\psi$ implies that $\text{Hess}[Z^\rho_{n,\lambda}]\preceq \lambda M \Lambda_n +\frac{\Id}{\gamma^2}$, and so $$\Bb_{Z^\rho_{n,\lambda}}(\theta, \theta')\leq \frac{1}{2\gamma^2}\|\theta-\theta'\|^2_{\lambda\gamma^2M\Lambda_n+\Id}\,.$$
Thus, the Bregman information gain can be upper-bounded in terms of a Gaussian integral, which can be evaluated via 
straightforward calculations to give
$$\gamma_{n,\lambda}^{\rho} \leq 
-\log\left(\frac{\int\exp\bpa{-\frac{1}{2\gamma^2}\|\theta-\widehat\theta_{\lambda,n}\|^2_{\lambda\gamma^2M\Lambda_n+\Id
} }
\mathrm{d}\theta}{\int\exp(-\frac{1}{2\gamma^2}\|\theta\|^2_2)\mathrm{d}\theta}\right) =\frac{1}{2}\log\det(\lambda 
M\gamma^2\Lambda_n+\Id)\,.$$
This concludes the proof.
\end{proof}

\paragraph{Comparison with state of the art.}
To ease discussion of the above result, it is more practical to rewrite the confidence set in terms of the 
\emph{regularized losses} $\tloss_t = \loss_t + \rho$ as follows:
\begin{equation}\label{eq:GLM_conf_set_regularized}
 \Theta_n = \ev{\theta\in\real^d: \sum_{t=1}^n \pa{{\tloss}_t(\theta) - {\tloss}_t(\htheta_n)} \le \wt{\beta}_n}.
\end{equation}
With this notation, the main claim of \Cref{cor:smooth} can be rewritten as a bound on the confidence-width parameter 
$\wt{\beta}_n$ as 
\begin{align*}
\wt{\beta}_n &= \frac{\bnorm{\thetastar}_2^2}{2\lambda\gamma^2} + 
\frac{1}{2\lambda}\log{\det\pa{\gamma^2\lambda M\Lambda_n + \Id}} + \log \frac 
1\delta \\
&\leq  \frac{\bnorm{\thetastar}_2^2}{2\lambda\gamma^2} + 
\frac{d}{2\lambda}\log\left(1 + \frac{\gamma^2\lambda ML^2n}{d}\right) + \log \frac 
1\delta \,.\end{align*}
If it is known in advance that $\norm{\thetastar}_2 \leq B$, then the choice $\gamma = B$ yields
\begin{equation*}
\wt{\beta}_n \leq \frac{1 + \log(\det(B^2\lambda M\Lambda_n + \Id))}{2\lambda} \leq \frac{1}{2\lambda}\left(1 + 
d\log\left(1 + \frac{B^2\lambda ML^2n}{d}\right)\right)\,.
\end{equation*}
In the special case of linear models, this recovers the classic results of \citet{abbasi2011improved} (see also 
\citealp{dlPLS09} and \citealp{flynn2023improved}). For GLMs, the most directly comparable result is due to 
\citet{lee2024unified}, who work under the slightly more restrictive assumption that the parameter set is compact, and 
provide confidence sets with width as given in our Equation~\eqref{eqn:ewa_regret_smooth_wc} (which is a looser version 
of the width bound guaranteed by our result, given in Equation~\eqref{eqn:ewa_conf_width_smooth}). All known previous 
bounds from the literature suffer from additional constant factors such as a uniform lower bound on the second 
derivative $\psi''$ \citep{jun2017scalable,li2017provably,emmenegger2023likelihood}. For the special case of 
parameter estimation of exponential-family distributions (i.e., GLMs with fixed covariates), our bound recovers the 
confidence sequence proposed by \citet{bregman2023}, with width dependent on the Bregman information gain (which name 
was in fact coined in said work).

To the best of our knowledge, the data-dependent regret bound implied by \Cref{lemma:smoothinfo} does not appear 
explicitly in any prior work on EWA. In the worst case, it recovers a classic regret bound of \citet{kakade2004online} 
for the EWA forecaster under the same boundedness and smoothness assumptions and with the same prior. However, our 
data-dependent bound can be much smaller in practice, especially if $L$ is a loose upper bound on the norm of the 
largest covariate. In addition, the data-dependent regret bound is adaptive to certain easy sequences of covariates. For 
instance, the data-dependent regret bound can be used to obtain
\begin{equation}\label{eq:regret_rank}
\regretq(\bar{\theta}) \leq \frac{\|\bar{\theta}\|_2^2}{2\lambda\gamma^2} + 
\frac{\mathrm{rank}(\Lambda_n)}{2\lambda}\log\left(1 + \frac{\gamma^2ML^2n}{\mathrm{rank}(\Lambda_n)}\right).
\end{equation}
Evidently, this improvement is inherited by our confidence width parameter as well, demonstrating a clear improvement 
over the best previously known results of \citet{lee2024unified}. This fact follows from a small modification of the 
determinant-trace inequality in Lemma~10 of \citet{abbasi2011improved}, which accounts for the fact that $\Lambda_n$ 
may not have full rank (cf. \Cref{lem:det_tr}).

\subsubsection{Obliviously chosen covariates}\label{sec:cases_oblivious}
The guarantees provided in the previous section can be tightened by making a stronger assumption about the sequence of 
covariates: that each $X_t$ is chosen independently of the realized labels $Y_{k}$ (for all $k\neq t$). Some 
well-studied special cases are the \emph{fixed-design} setting where the sequence of covariates is arbitrary but fixed 
before the labels $Y_t$ are drawn, and the \emph{i.i.d.}~setting where each $X_t$ is independently from some fixed 
distribution, independently of all labels and the other covariates. More generally, the set of covariates can be 
drawn from any joint distribution as long as it is independent of the realized labels. 

The lack of dependence between covariates and labels allows us to use online learning algorithms that have prior access 
to the sequence of covariates---a setting that is thoroughly studied in the literature under the name 
\emph{transductive online learning} or \emph{sequential prediction with transductive priors}. Since this setting only 
makes sense for sequences of fixed length, the results we prove below will naturally hold for a fixed sample size $n$. 
Without loss of generality\footnote{If this assumption does not hold without preprocessing, we can work in the 
subspace spanned by the covariates and aim to estimate the projection of $\thetastar$ to said space. Obviously, 
estimating the orthogonal component is impossible in such a situation.}, we will assume that the matrix $\Lambda_n = 
\sum_{t=1}^n X_tX_t\transpose$ is full rank.

The result we will state below will assume that $\psi$ is globally $M$-smooth on $\real$ and locally $m$-strongly 
convex on $[-b,b]$, and furthermore we will suppose that $|\inner{\theta^{\star}}{X_t}| \leq b$ holds. 
For the given sequence of covariates $X_1, \dots, X_n$, we will define the associated \emph{polar set} 
(at scale $b$) as $\Sw_{n,b} = \{\theta \in \mathbb{R}^d: \max_{t \in [n]}|\inner{\theta}{X_t}| \leq b\}$, and 
note that this is a convex set that is guaranteed to include $\thetastar$. For the comparator in the regret analysis, 
we will use the constrained MLE $\wh \theta_{n,b} = \argmin_{\theta \in \Sw_{n,b}}\sum_{t=1}^{n}\ell_t(\theta)$.
The following theorem is our main result about smooth and strongly convex GLMs in this setting, which follows from 
instantiating \Cref{thm:main-reduction} with a transductive online algorithm.
\begin{theorem}\label{thm:transductive}
Suppose that $\thetastar$ satisfies $|\inner{\theta^{\star}}{X_t}| \leq b$ and that 
that $\psi$ is $M$-smooth on $\real$ and $m$ strongly convex on $[-b,b]$, and denote the condition number by $\kappa 
= \frac{M}{m}$. Let $\htheta_{n,b} = 
\argmin_{\theta \in \Sw_{n,b}} \sum_{t=1}^n \ell_t(\theta)$, define $\Psi:\Theta \ra \real$ as $\Psi(\theta) = 
\sum_{t=1}^n \psi\bpa{\iprod{X_t}{\theta}}$ for all $\theta$, and let $\mathcal{B}_{\Psi}$ denote the associated 
Bregman divergence. Then, for any $\delta > 0$, the set defined as
\begin{equation}
 \Theta_n = \ev{\theta: \mathcal{B}_{\Psi}\bpa{\theta,\htheta_{b,n}} \le d \log\pa{1 + 2\kappa} + 2\log \frac 
1\delta }\label{eqn:transductive_width}
\end{equation}
satisfies $\PP{\thetastar\not\in \Theta_n} \le \delta$.
\end{theorem}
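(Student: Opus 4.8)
The plan is to instantiate the analytic conversion of \Cref{thm:main-reduction} at the given horizon $n$, taking the comparator to be the constrained MLE $\htheta_{b,n}$ and the online learner to be an EWA mixture (\Cref{sec:EWA}) with learning rate $\lambda=1$ and Gaussian prior $q_1=\mathcal N\bpa{\thetastar,\tfrac1c\Lambda_n^{-1}}$, where $c>0$ will be tuned. This is legitimate because the construction is simultaneously \emph{analytic} and \emph{transductive}: the forecaster is never executed, so it may centre its prior at the unknown $\thetastar$, and since the whole design $X^n$ is revealed in advance it may also shape the covariance through $\Lambda_n$ --- a freedom unavailable for adaptively chosen covariates. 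Writing $L(\theta)=\sum_{t=1}^n\ell_t(\theta)$ and using that for $\lambda=1$ the cumulative mixture log-loss telescopes, \Cref{prop:generic_lbd} applied to this mixture gives, with probability at least $1-\delta$,
\[
\log\int e^{L(\thetastar)-L(\theta)}\,\dd q_1(\theta)=\sum_{t=1}^n\bpa{\ell_t(\thetastar)-\Loss_t(q_t)}\le\log\tfrac1\delta .
\]
The whole task is then to lower-bound this integral by $e^{\mathcal B_\Psi(\thetastar,\htheta_{b,n})}$ times an $n$-free factor.

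The reshaping into $\mathcal{B}_\Psi$ is purely convex-analytic. Since the label-linear part of $\ell_t$ carries no Bregman divergence, $\mathcal B_L=\mathcal B_\Psi$, and expanding $L$ twice around $\htheta_{b,n}$ yields the exact identity
\[
\int e^{L(\thetastar)-L(\theta)}\dd q_1=e^{\mathcal B_\Psi(\thetastar,\htheta_{b,n})}\int e^{-\mathcal B_\Psi(\theta,\htheta_{b,n})-\iprod{\nabla L(\htheta_{b,n})}{\theta-\thetastar}}\dd q_1(\theta).
\]
The key point is that $\mathcal B_\Psi(\thetastar,\htheta_{b,n})$ is isolated while the gradient $\nabla L(\htheta_{b,n})$ --- which need not vanish for the \emph{constrained} minimiser --- survives only through a linear term, so the argument never requires an interior optimum.

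Next I would lower-bound the remaining integral using \emph{global} $M$-smoothness, $\mathcal B_\Psi(\theta,\htheta_{b,n})\le\tfrac M2\norm{\theta-\htheta_{b,n}}_{\Lambda_n}^2$, which turns it into an explicit Gaussian moment generating function. Setting $\Delta=\norm{\htheta_{b,n}-\thetastar}_{\Lambda_n}$, $\hat w=\Lambda_n^{1/2}(\htheta_{b,n}-\thetastar)$ and $g=\Lambda_n^{-1/2}\nabla L(\htheta_{b,n})$, the evaluation produces a factor $(1+M/c)^{-d/2}\exp\bpa{\tfrac{\norm{M\hat w-g}^2}{2(c+M)}-\tfrac M2\Delta^2}$. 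Here first-order optimality of $\htheta_{b,n}$ over the convex polar set $\Sw_{n,b}\ni\thetastar$ gives $\iprod{\nabla L(\htheta_{b,n})}{\thetastar-\htheta_{b,n}}\ge0$, i.e.\ $\iprod{g}{\hat w}\le0$, so $\norm{M\hat w-g}^2\ge M^2\Delta^2$ and the constraint can only help. Combining with the supermartingale bound above gives
\[
\mathcal B_\Psi(\thetastar,\htheta_{b,n})\le\log\tfrac1\delta+\tfrac d2\log\bpa{1+\tfrac Mc}+\tfrac{M}{2(1+M/c)}\,\Delta^2 .
\]

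The crux, and the step I would handle most carefully, is eliminating the random error term $\Delta^2$ by \emph{self-bounding} rather than by a direct (and lossy) tail bound, which is what keeps the width free of any $\log n$. Local $m$-strong convexity on $[-b,b]$ --- applicable because $\thetastar,\htheta_{b,n}\in\Sw_{n,b}$ and this set is convex, so the connecting segment stays feasible --- gives $\tfrac m2\Delta^2\le\mathcal B_\Psi(\thetastar,\htheta_{b,n})$. Substituting $\Delta^2\le\tfrac2m\mathcal B_\Psi(\thetastar,\htheta_{b,n})$ into the last display produces $\mathcal B_\Psi\le\log\tfrac1\delta+\tfrac d2\log(1+M/c)+\tfrac{\kappa}{1+M/c}\mathcal B_\Psi$; with the tuning $1+M/c=1+2\kappa$ (that is, $c=m/2$) the self-coefficient $\tfrac{\kappa}{1+2\kappa}$ satisfies $\bpa{1-\tfrac{\kappa}{1+2\kappa}}^{-1}=\tfrac{1+2\kappa}{1+\kappa}<2$, and rearranging yields $\mathcal B_\Psi(\thetastar,\htheta_{b,n})\le d\log(1+2\kappa)+2\log\tfrac1\delta$, which is the claimed set. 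I expect the only real subtleties to be verifying that the contraction constant stays below $1$ (which is precisely where $\kappa$ enters as $1+2\kappa$) and keeping the constrained-optimality inequality oriented correctly throughout; the Gaussian integral and the telescoping identity are routine.
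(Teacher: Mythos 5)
Your proposal is correct and follows essentially the same route as the paper's proof: the same EWA mixture with Gaussian prior centred at $\thetastar$ and covariance proportional to $\Lambda_n^{-1}$ (your $c=m/2$ is the paper's $\gamma^2=2/m$), the same supermartingale/telescoping argument via \Cref{prop:generic_lbd}, the same use of global $M$-smoothness to reduce to a Gaussian integral, the same first-order optimality of the constrained MLE over $\Sw_{n,b}$, and the same $m$-strong-convexity self-bounding step with the same tuning. The only deviation is bookkeeping: the paper routes through the generic EWA regret bound (\Cref{lemma:regrewa}), expanding the regularized objective around its minimizer $\wh{\theta}_{n,\gamma}$ where the gradient vanishes, whereas you expand $L$ directly around $\htheta_{n,b}$ and carry the non-vanishing gradient exactly through the Gaussian integral (using $\siprod{g}{\hat w}\le 0$ to show it only helps), which yields the marginally tighter self-coefficient $\kappa/(1+2\kappa)$ in place of the paper's $1/2$ before both are rounded to the identical final width $d\log(1+2\kappa)+2\log(1/\delta)$.
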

Notably, the width of the confidence set above does not show \emph{any} dependence on the sample size $n$, and in 
particular it removes the $\log n$ factor that appeared in the previously stated guarantees for adaptively chosen 
covariates. Also, the bound is completely independent of the realization of the covariate sequence, and in particular 
is invariant to linear transformations of the coordinates of $\thetastar$ and $X^n$. We contextualize this improvement 
in more detail below, after presenting the proof. At a high level, the construction for the proof involves executing 
EWA with a prior that takes into account the complete sequence of covariates, the true parameter $\thetastar$, and the 
log-partition function $\psi$.
\begin{proof}
The proof is based on instantiating the regret bound of EWA executed with $\rho(\theta) = 
\frac{1}{\gamma^2} \norm{\theta - \thetastar}_{\Lambda_n}^2$. A crucial observation is that, thanks to the strong convexity of $\psi$ on $[-b,b]$, the function $\Psi$ is strongly convex on $\Sw_{n,b}$, with respect to the weighted norm $\norm{\cdot}_{\Lambda_n}$, in the following sense:
\begin{equation}\label{eqn:quad_bregman}
\forall \theta, \theta^{\prime} \in \Sw_{n,b}, ~\frac{m}{2}\|\theta - \theta^{\prime}\|_{\Lambda_n}^2 \leq \mathcal{B}_{\Psi}(\theta, \theta^{\prime})\,.
\end{equation}
We define $Z_{n,\gamma}(\theta) = \sum_{t=1}^{n}\ell_t(\theta) + \frac{1}{2\gamma^2}\|\theta - \theta^{\star}\|_{\Lambda_n}^2$ and $\wh \theta_{n,\gamma} = \argmin_{\theta \in \mathbb{R}^d}Z_{n,\gamma}(\theta)$. Due to the smoothness of $\psi$ on $\real$, the Bregman divergence induced by $Z_{n,\gamma}$ can be upper bounded by a quadratic function of $\theta$. In particular, for any $\theta, \theta^{\prime} \in \mathbb{R}^d$,
\begin{align*}
\mathcal{B}_{Z_{n,\gamma}}(\theta, \theta^{\prime}) 
&= \mathcal{B}_{\Psi}(\theta, \theta^{\prime}) + \frac{1}{2\gamma^2}\|\theta - \theta^{\prime}\|_{\Lambda_n}^2 \leq 
\frac{1}{2}(M + 1/\gamma^2)\|\theta - \theta^{\prime}\|_{\Lambda_n}^2\,.
\end{align*}
Now, using the generic EWA regret bound in \Cref{lemma:regrewa} 
(with $\lambda = 1$ and the prior described above) and applying the upper bound given above, we get
\begin{align*}
\regret_{q^n}(\wh \theta_n) &\leq -\log\left(\frac{\int\exp(-\mathcal{B}_{Z_{n,\gamma}}(\theta, \wh 
\theta_{n,\gamma}))\mathrm{d}\theta}{\int\exp(-\frac{1}{2\gamma^2}\|\theta - 
\theta^{\star}\|_{\Lambda_n}^2)\mathrm{d}\theta}\right) + \frac{1}{2\gamma^2}\|\wh \theta_n - 
\theta^{\star}\|_{\Lambda_n}^2\\
&\leq -\log\left(\frac{\int\exp(-\frac{1}{2}(M + \frac{1}{\gamma^2})\|\theta - 
\wh \theta_{n,\gamma}\|_{\Lambda_n}^2)\mathrm{d}\theta}{\int\exp(-\frac{1}{2\gamma^2}\|\theta - 
\theta^{\star}\|_{\Lambda_n}^2)\mathrm{d}\theta}\right) + \frac{1}{m\gamma^2}\mathcal{B}_{\Psi}(\theta^{\star}, \wh 
\theta_n)\\
&= \frac{d}{2}\log(1 + \gamma^2M) + \frac{1}{m\gamma^2}\mathcal{B}_{\Psi}(\theta^{\star}, \wh\theta_n)\,,
\end{align*}
where the last step follows from evaluating the Gaussian integral appearing in the second line. Along with \Cref{thm:main-reduction}, this implies that
\begin{equation}\label{eq:loss_diff_bound}
\sum_{t=1}^{n}\pa{\ell_t(\theta^{\star}) - \ell_t(\wh\theta_{n,b})} \leq \frac{d}{2}\log(1 + \gamma^2M) + 
\frac{1}{m\gamma^2}\mathcal{B}_{\Psi}(\theta^{\star}, \wh\theta_{n,b}) + \log\frac{1}{\delta}\,
\end{equation}
holds with probability at least $1-\delta$. To proceed from here, notice that by the first-order optimality condition 
on the set $\Theta_{n,b}$, we have that $\htheta_{n,b}$ satisfies
$
\siprod{\theta - \htheta_{n,b}}{{\textstyle\sum_{t=1}^{n}}\nabla\ell_t(\htheta_{n,b})} \geq 0
$
for all $\theta \in \Sw_{n,b}$, and in particular for $\thetastar$ too. Therefore,
\begin{equation*}
\mathcal{B}_{\Psi}(\theta^{\star}, \htheta_{n,b}) \leq \sum_{t=1}^{n}\pa{\ell_t(\theta^{\star}) - 
\ell_t(\htheta_{n,b})}\,.
\end{equation*}
Combining this with the bound of Equation~\eqref{eq:loss_diff_bound}, we get
\begin{equation*}
\left(1 - \frac{1}{m\gamma^2}\right)\mathcal{B}_{\Psi}(\theta^{\star}, \htheta_{n,b}) \leq \frac{d}{2}\log(1 + \gamma^2M) 
+ \log\frac{1}{\delta}\,,
\end{equation*}
Picking $\gamma^2 = 2/m$ gives
\begin{equation*}
\mathcal{B}_{\Psi}(\theta^{\star}, \htheta_{n,b}) \leq d\log(1 + 2\kappa) + 2\log(1/\delta)\,,
\end{equation*}
which concludes the proof.
\end{proof}

\paragraph{Comparison with state of the art.} We start by noting that the confidence set given in 
\Cref{thm:transductive} is generally not equal to the set defined in Equation~\eqref{eq:GLM_conf_set}, due to the 
restriction of $\wh{\theta}_{n,b}$ to the polar set $\Sw_{n,b}$. Note however that the two sets coincide if the MLE 
lies within $\Sw_{n,b}$, which can be verified empirically when evaluating the confidence set. For the special case of 
linear regression, $b$ can be clearly set as $+\infty$ and the condition number becomes $\kappa = 1$. In this case, our 
result recovers a classic result of \citet{Cochran1934}
with a slightly worse constant.  For other GLMs, we are 
not aware of any comparable result, and in particular we believe that our confidence set is the first to have a 
width independent of $n$. Furthermore, our construction
property correctly accounts for the parametrization-invariance of GLMs: when transforming all covariates by the 
invertible linear map $A$ and the true parameter $\thetastar$ by $A^{-1}$, the distribution of labels remains 
unchanged. Since the polar set $\Sw_{n,b}$ used in our construction retains this invariance property, our confidence 
set also remains invariant to such reparametrizations. Accordingly, the scale of the covariates does not appear in the 
confidence width either. This phenomenon has been thoroughly studied in the context of transductive online learning, 
where several works have shown that knowing the covariates ahead of time can enable proving scale-invariant regret 
bounds \citep{pmlr-v98-gaillard19a,qian2024refined}. Additionally, these factors have been shown to be impossible to remove without prior knowledge of 
the covariate sequence \citep{Kotlowski2020TCS,foster2018logistic,qian2024refined}.

\subsection{Algorithmic conversions}\label{sec:cases_algorithmic}
We now turn to deriving confidence sequences based on the algorithmic conversion scheme proposed in 
\Cref{thm:algorithmic_general}. We recall that this conversion scheme produces confidence sets whose size depends on the 
regret of an online algorithm against the true parameter $\theta^{\star}$. This allows us to derive tighter confidence 
sets when $\theta^{\star}$ has additional structure. To demonstrate this, the focus of this section is mainly on results 
for \emph{sparse} GLMs, where $\theta^{\star}$ contains mostly zeros. In particular, we assume that $\theta^{\star}$ is 
$s$-sparse for some known $s$ (i.e.\,, that $\sum_{i=1}^d \mathbb{I}\{\theta_i \neq 0\} = s$). Throughout this section, 
we focus solely on the setting where each covariate $X_t$ can depend on the previous observations $(X_k, 
Y_k)_{k=1}^{t-1}$ in an arbitrary fashion.

As mentioned in \Cref{sec:algorithmic}, the function $d_{\psi}$ that appears in the 
algorithmic conversion of \Cref{thm:algorithmic_general} is generally not convex, which means that the 
resulting confidence sets may not be convex either. To address this issue, we assume below that the log-partition 
function $\psi$ is $m$-strongly convex on the interval $[-b,b]$, which implies that $d_\psi(z,z') \ge 
\frac{m(z-z')^2}{8}$ for all $z, z^{\prime} \in [-b, b]$. In particular, this will allow us to derive convex confidence sets that can be stated conveniently in terms of a well-chosen set of 
\emph{pseudo-labels}. Indeed, defining the truncation operator $[z]_b = \max\ev{\min\ev{z,b}-b}$ and the pseudo-label 
$\wh{Y}_t = \left[\iprod{\theta_t}{X_t}\right]_b$, the confidence set of 
\Cref{thm:algorithmic_general} can be shown to be included in the convex set
\begin{equation}\label{eq:det_alg_set}
\Theta_n = \left\{\theta \in \mathbb{R}^d\,:\, \frac 12 \sum_{t=1}^{n}\pa{\iprod{\theta}{X_t} - \wh{Y}_t}^2 \leq 
\beta_n\right\}\,
\end{equation}
for some appropriately chosen $\beta_n$.

To emphasize the improvement of our new algorithmic conversions upon similar existing results, which use deterministic 
forecasters, we first focus on the generic algorithmic conversion scheme for deterministic forecasters in 
\Cref{thm:algorithmic_general}. This alone allows us to recover tighter (by constant factors) algorithmic conversions 
than those in \citet{abbasi2012online} and \citet{jun2017scalable}. However, since all of these conversions use 
deterministic forecasters, the resulting confidence sets have sub-optimal confidence width by at least a factor of 
$\log(n)$. In \Cref{sec:ewa_algorithmic} we address this issue with a tighter algorithmic conversion that is tailored to 
the EWA forecaster.

\subsubsection{Algorithmic Conversions for Deterministic Forecasters}
\label{sec:det_algorithmic}
We start with a straightforward application of \Cref{thm:algorithmic_general} for the case of strongly convex 
log-likelihoods.
\begin{theorem}
Suppose that $\thetastar$ satisfies $|\inner{\theta^{\star}}{X_t}| \leq b$ and that 
$\psi$ is $m$-strongly convex on $[-b,b]$. Then, for any $\delta\in(0,1)$, the set defined in 
Equation~\eqref{eq:det_alg_set} with $\wh{Y_t} = [\iprod{\theta_t}{X_t}]_b$ satisfies $\PP{\exists n: 
\thetastar\not\in \Theta_n} \le \delta$ with the choice
\begin{equation*}
\beta_n = \frac{2}{m}\,\regret_{\theta^n}(\theta^{\star}) + \frac{4}{m}\log\frac{1}{\delta}\,.
\end{equation*}
\label{thm:det_alg_otcs}
\end{theorem}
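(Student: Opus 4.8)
The plan is to obtain this confidence set as a direct consequence of the generic algorithmic conversion in \Cref{thm:algorithmic_general}, converting its bound on $\sum_t d_\psi$ into a bound on the squared pseudo-label residuals via strong convexity. Concretely, \Cref{thm:algorithmic_general} guarantees that, with probability at least $1-\delta$, simultaneously for all $n$,
\begin{equation*}
\sum_{t=1}^n d_\psi(\inner{\theta_t}{X_t}, \inner{\theta^\star}{X_t}) \le \tfrac12 \regret_{\theta^n}(\theta^\star) + \log\tfrac1\delta\,.
\end{equation*}
It therefore suffices to show that, under the stated assumptions, each summand on the left dominates $\tfrac{m}{8}\bpa{\wh{Y}_t - \inner{\theta^\star}{X_t}}^2$; rearranging and multiplying through by $\tfrac4m$ then reproduces exactly the claimed width $\beta_n$.

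The core of the argument, and the step I expect to require the most care, is the treatment of the truncation. The strong convexity of $\psi$ on $[-b,b]$ only yields $d_\psi(z,z')\ge \tfrac{m}{8}(z-z')^2$ when \emph{both} arguments lie in $[-b,b]$. We have $\inner{\theta^\star}{X_t}\in[-b,b]$ by assumption, but the forecaster's prediction $\inner{\theta_t}{X_t}$ may fall outside this interval, which is precisely why the pseudo-label $\wh{Y}_t = [\inner{\theta_t}{X_t}]_b$ clips it back. To legitimize replacing $\inner{\theta_t}{X_t}$ by $\wh{Y}_t$, I would first establish a monotonicity property of $d_\psi$: for fixed $z^\star\in[-b,b]$, the map $z\mapsto d_\psi(z,z^\star)$ is nonincreasing on $(-\infty,z^\star]$ and nondecreasing on $[z^\star,\infty)$. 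This follows by differentiating, $\partial_z d_\psi(z,z^\star)=\tfrac12\psi'(z)-\tfrac12\psi'\bpa{\tfrac{z+z^\star}{2}}$, and invoking the monotonicity of $\psi'$ (convexity of $\psi$): for $z\ge z^\star$ we have $z \ge \tfrac{z+z^\star}{2}$ so the derivative is nonnegative, and symmetrically for $z\le z^\star$. Hence $d_\psi(\cdot,z^\star)$ is minimized at $z^\star$ and grows monotonically as $z$ moves away from it.

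Applying this with $z^\star=\inner{\theta^\star}{X_t}$ and $z=\inner{\theta_t}{X_t}$, and observing that clipping to $[-b,b]$ either moves $z$ strictly toward $z^\star$ or leaves it fixed, gives $d_\psi(\inner{\theta_t}{X_t},\inner{\theta^\star}{X_t})\ge d_\psi(\wh{Y}_t,\inner{\theta^\star}{X_t})$. Now both $\wh{Y}_t$ and $\inner{\theta^\star}{X_t}$ lie in $[-b,b]$, so the strong-convexity lower bound applies and yields $d_\psi(\inner{\theta_t}{X_t},\inner{\theta^\star}{X_t})\ge \tfrac{m}{8}\bpa{\wh{Y}_t - \inner{\theta^\star}{X_t}}^2$. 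Summing over $t$ and combining with the high-probability inequality above, on the same event and for all $n$,
\begin{equation*}
\frac{m}{8}\sum_{t=1}^n\bpa{\inner{\theta^\star}{X_t}-\wh{Y}_t}^2 \le \tfrac12\regret_{\theta^n}(\theta^\star) + \log\tfrac1\delta\,,
\end{equation*}
so that $\tfrac12\sum_{t=1}^n(\inner{\theta^\star}{X_t}-\wh{Y}_t)^2 \le \tfrac2m\regret_{\theta^n}(\theta^\star)+\tfrac4m\log\tfrac1\delta = \beta_n$. This is precisely the statement that $\theta^\star\in\Theta_n$ for every $n$ on an event of probability at least $1-\delta$, i.e.\ $\PP{\exists n:\theta^\star\notin\Theta_n}\le\delta$, which completes the argument. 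All remaining steps beyond the monotonicity lemma are routine algebra, so the only genuinely nontrivial ingredient is the truncation-monotonicity reduction.
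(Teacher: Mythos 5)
Your proof is correct and takes essentially the same route as the paper: the paper's proof likewise combines \Cref{thm:algorithmic_general} with the strong-convexity bound $d_\psi(z,z') \ge \frac{m}{8}(z-z')^2$ for $z,z' \in [-b,b]$ and with the truncation step $d_{\psi}([z]_b, z') \le d_{\psi}(z, z')$ (its \Cref{lem:truncation}), which in turn rests on exactly the monotonicity property you prove by differentiating $z \mapsto d_\psi(z,z')$ and invoking monotonicity of $\psi'$ (its \Cref{lem:quasi}). The only difference is presentational: you inline the two appendix lemmas and the final algebra that the paper leaves implicit.
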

\begin{proof}
Due to \Cref{lem:truncation}, $d_{\psi}([\iprod{\theta_t}{X_t}]_b, \iprod{\theta^{\star}}{X_t}) \leq d_{\psi}(\iprod{\theta_t}{X_t}, \iprod{\theta^{\star}}{X_t})$. The claim then follows by combining \Cref{thm:algorithmic_general} and the discussion above.
\end{proof} 
If we apply any of these confidence set to sparse linear models, where $m=1$, and use the algorithm of
\citet{gerchinovitz2013sparsity} to generate the sequence $\theta_1, \dots, \theta_n$, then we obtain confidence sets 
of the form
\begin{equation}
\Theta_n = \left\{\theta \in \mathbb{R}^d\;:\; \sum_{t=1}^{n}\wh\ell_t(\theta) 
= \mathcal{O}\left(\max_tY_t^2s\log(\tfrac{dn}{s}) + \log\tfrac{1}{\delta}\right)\right\}\,.\label{eqn:seqsew_set}
\end{equation}
Here, the big-O notation hides large numerical constants and logarithmic factors of problem parameters such as 
$\norm{\thetastar}$ and $\sup_{t} \norm{X_t}$. The bound we will state in \Cref{thm:ewa_conversion} will spell 
all such dependencies out, and make strict improvements over the above guarantee. Since the above result uses a 
deterministic forecaster, the bound features a factor of $\max_tY_t^2$, which 
may generally contribute an extra factor of $b + M\log n$ to the confidence width (for $\sqrt{M}$-sub-Gaussian noise). 
We will address this limitation in the next section.

\paragraph{Comparison with state of the art.} In the case of linear models, our result is directly comparable with (and 
in fact inspired by) the result of Theorem~1 in \citet{abbasi2012online}, which shows that the confidence set defined 
in Equation~\eqref{eq:det_alg_set} is valid with the choice 
\begin{equation*}
\beta_n = \frac{1}{2} + 2\,\regret_{\theta^n}(\theta^{\star}) + 16\log\left(\frac{\sqrt{8} + \sqrt{1 + 2\,\regret_{\theta^n}(\theta^{\star})}}{\delta}\right)\,.
\end{equation*}
More generally, Theorem 1 by \citet{jun2017scalable} provides a comparable result for $M$-smooth and $m$-strongly 
convex GLMs, showing a confidence width of
\begin{equation*}
\beta_n = \frac{1}{2} + \frac{2}{m}\,\regret_{\theta^n}(\theta^{\star}) + \frac{4M}{m^2}\log\left(\frac{1}{\delta}\sqrt{4 + \frac{8}{m}\,\regret_{\theta^n}(\theta^{\star}) + \frac{64M^2}{4m^4\delta^2}}\right)\,.
\end{equation*}
In both cases, our confidence set has a much simpler expression, tighter numerical constants, and an improved 
dependence on $m$ in the case of GLMs. In both cases, the improvement is due to our use of \Cref{prop:shifted_lbd} 
instead of the self-normalized concentration inequality developed by \citet{abbasi2012online}.

\subsubsection{Algorithmic Conversions for the EWA Forecaster}
\label{sec:ewa_algorithmic}

We now provide an improved result that makes use of randomized forecasters, which will allow us to tighten the bounds 
above by removing the unnecessary $\max_t Y_t^2$ factor from the bound. Our confidence sets will take the same form as 
before, except that we will use pseudo-labels generated by the EWA forecaster with learning rate $\lambda = 1/2$, 
defined as follows:
\begin{equation}
\wh Y_t = \int \left[\inner{\theta}{X_t}\right]_b\mathrm{d}q_{t+1}(\theta)\,.\label{eqn:ewa_pseudo_label}
\end{equation}
The following theorem is an improvement of \Cref{thm:algorithmic_general}, which is specialized to the EWA forecaster.
\begin{theorem}
Let $q_1, q_2, \dots$ be the sequence of distributions played by the EWA forecaster with learning rate $\lambda=1/2$. 
Then,
\begin{equation*}
\PP{\exists n\,:\,\sum_{t=1}^n\log\int e^{d_{\psi}(\inner{\theta}{X_t}, \inner{\theta^\star}{X_t})}\dd q_{t+1}(\theta) 
\geq 
\frac{1}{2}\,\regret_{\qn,1/2}(\theta^\star) + \log\frac{1}{\delta}} \le \delta\,.
\end{equation*}
\label{thm:ewa_conversion}
\end{theorem}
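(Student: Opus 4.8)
The plan is to reduce the claim to \Cref{prop:shifted_lbd} instantiated at $\eta = 1/2$. Concretely, I will establish a \emph{deterministic} identity, valid for every outcome sequence and every $n$, showing that the left-hand side of the stated event differs from $\sum_{t=1}^n \ell_t(\theta^\star) - \sum_{t=1}^n \shLoss_t(q_t)$ (with $\eta=1/2$) by exactly $\tfrac12\regretql{1/2}(\theta^\star)$. Once this identity is in place, the event in the theorem coincides with the event controlled by \Cref{prop:shifted_lbd}, and the bound follows from a single application of that proposition to the EWA sequence $\qn$.

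First I would record the algebraic identity linking $d_\psi$ to the shifted loss. Writing $D_t(\theta) = d_\psi(\inner{\theta}{X_t}, \inner{\theta^\star}{X_t})$ as in the proof of \Cref{thm:algorithmic_general}, the label-dependent term $-\inner{\theta}{X_t}Y_t$ and the $-\log h(Y_t)$ term of $\ell_t$ are affine in $\theta$ and therefore cancel in the combination $\tfrac12\ell_t(\theta) + \tfrac12\ell_t(\theta^\star) - \ell_t(\tfrac12\theta + \tfrac12\theta^\star)$, leaving precisely the $\psi$-terms that define $d_\psi$. Hence $D_t(\theta) = \tfrac12\ell_t(\theta) + \tfrac12\ell_t(\theta^\star) - \shloss_t(\theta)$ with $\eta = 1/2$; this is the only point at which the explicit form of $d_\psi$ is used.

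The crux of the argument, and the step I expect to require the most care, is the evaluation of $\int e^{D_t(\theta)}\dd q_{t+1}(\theta)$ by exploiting the EWA update at learning rate $\lambda = 1/2$. Substituting the identity above together with the normalized EWA density $\dd q_{t+1}/\dd q_1 \propto e^{-\frac12\sum_{s=1}^t \ell_s}$, the key observation is that the factor $e^{\frac12\ell_t(\theta)}$ arising from $D_t$ exactly cancels the most recent loss term $e^{-\frac12\ell_t(\theta)}$ that distinguishes $q_{t+1}$ from $q_t$, effectively rewinding the mixture one step. Writing $W_t = \int e^{-\frac12\sum_{s=1}^t \ell_s}\dd q_1$ for the EWA normalizers and recalling that $W_t/W_{t-1} = e^{-\frac12 \mathcal{L}_{t,1/2}(q_t)}$, this cancellation produces the per-round identity $\log\int e^{D_t(\theta)}\dd q_{t+1}(\theta) = \tfrac12\ell_t(\theta^\star) + \tfrac12\mathcal{L}_{t,1/2}(q_t) - \shLoss_t(q_t)$, where $\shLoss_t(q_t)$ is the shifted log loss of the mixture $q_t$ at $\eta=1/2$ and $\mathcal{L}_{t,1/2}$ is the scaled log loss of \Cref{sec:EWA}.

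Finally I would sum this identity over $t = 1,\dots,n$ and regroup. The contribution $\tfrac12\sum_t \mathcal{L}_{t,1/2}(q_t)$ combines with $-\tfrac12\sum_t \ell_t(\theta^\star)$ to form exactly $\tfrac12\regretql{1/2}(\theta^\star)$, so that $\sum_{t=1}^n \log\int e^{D_t}\dd q_{t+1} - \tfrac12\regretql{1/2}(\theta^\star) = \sum_{t=1}^n \ell_t(\theta^\star) - \sum_{t=1}^n \shLoss_t(q_t)$ holds identically. Consequently the event in the theorem coincides with $\{\exists n: \sum_{t=1}^n \ell_t(\theta^\star) - \sum_{t=1}^n \shLoss_t(q_t) \ge \log(1/\delta)\}$, whose probability is at most $\delta$ by \Cref{prop:shifted_lbd} with $\eta = 1/2$, completing the proof. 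The only genuine subtlety is the index bookkeeping in the third step (aligning the one-step-shifted mixture $q_{t+1}$ with the $e^{\frac12\ell_t}$ factor); everything else is telescoping and rearrangement.
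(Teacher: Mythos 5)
Your proof is correct and takes essentially the same route as the paper's: the same identity $D_t(\theta) = \tfrac12\ell_t(\theta) + \tfrac12\ell_t(\theta^\star) - \ell_t^{(1/2)}(\theta) = d_\psi(\inner{\theta}{X_t},\inner{\theta^\star}{X_t})$, the same cancellation of the $e^{\frac12\ell_t(\theta)}$ factor against the EWA update at $\lambda = 1/2$ to get the per-round identity $\log\int e^{D_t(\theta)}\,\dd q_{t+1}(\theta) = \tfrac12\ell_t(\theta^\star) + \tfrac12\mathcal{L}_{t,1/2}(q_t) - \shLoss_t(q_t)$, and the same summation and application of \Cref{prop:shifted_lbd} with $\eta = 1/2$. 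Your phrasing of the update via the prior and the normalizers $W_t$ rather than the paper's one-step relation $\dd q_{t+1}/\dd q_t \propto e^{-\frac12\ell_t}$ is a purely cosmetic difference.
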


\begin{proof}
As in the proof of \Cref{thm:algorithmic_general}, we use the shorthand $D_{t}(\theta) = \frac 12 \ell_t(\theta) + 
 \frac 12 \ell_t(\theta^\star) - \loss_{t}^{(1/2)}(\theta)$, and 
observe that $D_{t}(\theta) = d_{\psi}(\inner{\theta}{X_t}, \inner{\theta^\star}{X_t})$. From the definition of the EWA 
forecaster, $\frac{\dd q_{t+1}}{\dd q_t}(\theta) \propto e^{-\frac{1}{2}\ell_t(\theta)}$. Therefore, we have
\begin{equation*}
\int e^{D_{t}(\theta)}\dd q_{t+1}(\theta) = \frac{\int e^{-\ell_t^{(1/2)}(\theta)}\dd q_t(\theta)}{\int e^{-\frac{1}{2}\ell_t(\theta)}\dd q_t(\theta)}e^{\frac{1}{2}\ell_t(\theta^\star)}\,.
\end{equation*}
Taking logarithms and rearranging terms, we obtain
\begin{equation*}
\sum_{t=1}^n\log\int e^{D_t(\theta)}\dd q_{t+1}(\theta) = 
\frac{1}{2}\,\regret_{\qn,1/2}(\theta^\star) + \sum_{t=1}^{n}\ell_t(\theta^{\star}) - \sum_{t=1}^{n}\mathcal{L}_t^{(1/2)}(q_t)\,.
\end{equation*}
The statement then follows from applying \Cref{prop:shifted_lbd}.
\end{proof}

Using this result, we can prove the following improved version of \Cref{thm:det_alg_otcs}, which transforms the 
pseudo-labels generated by the EWA forecaster and the regret of the EWA forecaster against $\theta^{\star}$ into a 
confidence sequence.

\begin{theorem}
Suppose that $\thetastar$ satisfies $|\inner{\theta^{\star}}{X_t}| \leq b$ and that 
$\psi$ is $m$-strongly convex on $[-b,b]$. Let $q_1, q_2, \dots$ are the distributions played by the EWA forecaster 
with learning rate $\lambda = 1/2$. Then, for any $\delta\in(0,1)$, the set defined in 
Equation~\eqref{eq:det_alg_set} with $\wh Y_t$ defined in Equation \eqref{eqn:ewa_pseudo_label}
 satisfies $\PP{\exists n: 
\thetastar\not\in \Theta_n} \le \delta$ with the choice
\begin{equation*}
\beta_n = \frac{2}{m}\,\regretql{1/2}(\theta^{\star}) + \frac{4}{m}\log\frac{1}{\delta}\,.
\end{equation*}
\label{thm:ewa_alg_otcs}
\end{theorem}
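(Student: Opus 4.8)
The plan is to mirror the proof of \Cref{thm:det_alg_otcs}, replacing the deterministic concentration bound of \Cref{thm:algorithmic_general} by its EWA-specialized version \Cref{thm:ewa_conversion}, and accounting for the fact that the pseudo-label $\wh Y_t$ defined in \eqref{eqn:ewa_pseudo_label} is now an \emph{average} of clipped predictions under $q_{t+1}$ rather than a single clipped prediction. \Cref{thm:ewa_conversion} already guarantees that, with probability at least $1-\delta$, the inequality
\[
\sum_{t=1}^n\log\int e^{d_{\psi}(\inner{\theta}{X_t}, \inner{\theta^\star}{X_t})}\dd q_{t+1}(\theta) \le \frac{1}{2}\regretql{1/2}(\theta^\star) + \log\frac{1}{\delta}
\]
holds simultaneously for all $n$. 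It therefore suffices to establish, for each round $t$, the pointwise lower bound
\[
\log\int e^{d_{\psi}(\inner{\theta}{X_t}, \inner{\theta^\star}{X_t})}\dd q_{t+1}(\theta) \ge \frac{m}{8}\bpa{\wh Y_t - \inner{\theta^\star}{X_t}}^2\,,
\]
after which summing and a rescaling reproduce the claimed $\beta_n$.

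I would derive this per-round bound through a short chain of four inequalities, each pointing in the favorable direction. First, Jensen's inequality applied to the concave logarithm pulls it inside the integral, giving $\log\int e^{d_\psi}\dd q_{t+1}\ge \int d_{\psi}(\inner{\theta}{X_t}, \inner{\theta^\star}{X_t})\dd q_{t+1}(\theta)$. Second, the truncation inequality \Cref{lem:truncation}, which is applicable because $\inner{\theta^\star}{X_t}\in[-b,b]$ by assumption, replaces the argument $\inner{\theta}{X_t}$ by its clip $[\inner{\theta}{X_t}]_b\in[-b,b]$ while only shrinking the integrand. Third, with both arguments now confined to $[-b,b]$, the $m$-strong convexity of $\psi$ on that interval yields the quadratic lower bound $d_{\psi}([\inner{\theta}{X_t}]_b, \inner{\theta^\star}{X_t}) \ge \frac{m}{8}\bpa{[\inner{\theta}{X_t}]_b - \inner{\theta^\star}{X_t}}^2$. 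Fourth, a second application of Jensen's inequality, this time to the convex map $u\mapsto(u-\inner{\theta^\star}{X_t})^2$, passes the integral inside the square and produces $\bpa{\wh Y_t - \inner{\theta^\star}{X_t}}^2$, using $\wh Y_t = \int[\inner{\theta}{X_t}]_b\dd q_{t+1}(\theta)$.

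The one point requiring care is the \emph{orientation} of the two Jensen steps: the first must lower-bound $\log\int e^{(\cdot)}$ by $\int(\cdot)$, while the second must lower-bound $\int(\cdot)^2$ by $(\int\,\cdot)^2$, and it is a happy coincidence that both go the right way. The only genuine structural requirement is that strong convexity be invoked \emph{after} truncation, so that both of its arguments lie in $[-b,b]$; this is exactly the role played by \Cref{lem:truncation}, now applied under the integral against $q_{t+1}$ rather than to a single point. Once the per-round inequality is in place, summing over $t$ and combining with \Cref{thm:ewa_conversion} gives $\frac{m}{8}\sum_{t=1}^n\bpa{\wh Y_t - \inner{\theta^\star}{X_t}}^2 \le \frac12\regretql{1/2}(\theta^\star) + \log\frac1\delta$, whence $\frac12\sum_{t=1}^n\bpa{\wh Y_t - \inner{\theta^\star}{X_t}}^2 \le \frac{2}{m}\regretql{1/2}(\theta^\star) + \frac{4}{m}\log\frac1\delta = \beta_n$, which is precisely the event $\theta^\star\in\Theta_n$ in \eqref{eq:det_alg_set}. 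The uniform-in-$n$ high-probability statement is inherited verbatim from \Cref{thm:ewa_conversion}.
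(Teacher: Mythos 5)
Your proposal is correct and is essentially identical to the paper's proof: the same four inequalities (Jensen for the logarithm, the truncation lemma \Cref{lem:truncation}, $m$-strong convexity giving $d_\psi(z,z') \ge \frac{m}{8}(z-z')^2$ on $[-b,b]$, and Jensen for the square), merely written as a lower bound on the log-integral rather than the paper's upper bound on $\frac{1}{2}\bpa{\iprod{\thetastar}{X_t}-\wh Y_t}^2$, followed by the same summation and application of \Cref{thm:ewa_conversion}. Your remark about invoking strong convexity only after truncation so that both arguments lie in $[-b,b]$ is exactly the role the clipping plays in the paper's argument as well.
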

\begin{proof}
Using Jensen's inequality, the strong convexity of $\psi$, \Cref{lem:truncation} and then Jensen's inequality again, we obtain
\begin{align*}
\frac 12 \pa{\iprod{\thetastar}{X_t} - \wh{Y}_t}^2 &\leq \frac{1}{2}\int([\inner{\theta}{X_t}]_b - \inner{\theta^{\star}}{X_t})^2\mathrm{d}q_{t+1}(\theta)\\
&\leq \frac{4}{m}\int d_{\psi}([\inner{\theta}{X_t}]_b, \inner{\theta^{\star}}{X_t})\mathrm{d}q_{t+1}(\theta)\\
&\leq \frac{4}{m}\int d_{\psi}(\inner{\theta}{X_t}, \inner{\theta^{\star}}{X_t})\mathrm{d}q_{t+1}(\theta)\\
&\leq \frac{4}{m}\log\int e^{d_{\psi}(\inner{\theta}{X_t}, \inner{\theta^{\star}}{X_t})}\mathrm{d}q_{t+1}(\theta)\,.
\end{align*}
The claim then follows by summing both sides over $t$ from $1$ to $n$, and then applying \Cref{thm:ewa_conversion}.
\end{proof}

Notably, the bound now depends on the regret of the EWA forecaster on the sequential probability assignment game with 
the \emph{logarithmic loss}, which allows us to remove the spurious $\max_t Y_t^2$ factor from the previous bound (that 
has resulted from using online algorithms with deterministic predictions).

\begin{theorem}
Suppose that $\psi$ is $M$-smooth on $\real$ and $m$-strongly convex on $[-b, b]$. In addition, suppose that $\theta^{\star}$ is $s$-sparse, $\|\theta^{\star}\|_2 \leq B$ and $\max_{t \geq 
1}\|X_t\|_{\infty} \leq L_{\infty}$. Let $q_1, q_2, \dots$ be the sequence of distributions played by the EWA forecaster 
with $\lambda = 1/2$ and the prior $q_1 = \sum_{S \subseteq [d]}\pi(S)q_S$, where $\rho(\theta) = \|\theta\|_2^2/(2B^2)$. Then the sequence of sets $(\Theta_t)_{t \geq 1}$, with $\Theta_n$ defined in 
Equation~\eqref{eq:det_alg_set} and $\wh Y_t$ defined in Equation \eqref{eqn:ewa_pseudo_label}, is a $1 - \delta$ 
confidence sequence with the choice 
\begin{equation*}
\beta_n = \frac{4s}{m}\log\frac{2ed\sqrt{1 + MB^2L_{\infty}^2n/2}}{s} + \frac{4}{m}\log\frac{2\sqrt{e}}{\delta}\,.
\end{equation*}
\label{cor:sparse_ewa_alg_otcs}
\end{theorem}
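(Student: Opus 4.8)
The plan is to chain the EWA-specific algorithmic conversion of \Cref{thm:ewa_alg_otcs} with the sparsity-adaptive regret bound of \Cref{lem:sparse}, and then to control the single remaining problem-dependent quantity: the restricted Bregman information gain $\gamma_{n,1/2}^{\rho,\bar S}$. By \Cref{thm:ewa_alg_otcs}, the set of \eqref{eq:det_alg_set} with pseudo-labels \eqref{eqn:ewa_pseudo_label} is a valid $1-\delta$ confidence sequence as soon as $\beta_n \ge \frac2m\regretql{1/2}(\thetastar) + \frac4m\log\frac1\delta$, so it suffices to obtain an almost-sure upper bound on $\regretql{1/2}(\thetastar)$ for the chosen forecaster. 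Since its prior is the subset-selection mixture $q_1 = \sum_{S}\pi(S)q_S$ with $\rho(\theta) = \norm{\theta}_2^2/(2B^2)$, I would invoke \Cref{lem:sparse} with $\lambda = 1/2$, comparator $\thetastar$ and support $\bar S = \supp(\thetastar)$ of size $|\bar S| = s$, yielding
\begin{equation*}
\regretql{1/2}(\thetastar) \le 2\Bigl(\gamma_{n,1/2}^{\rho,\bar S} + \rho(\thetastar) + s\log(2ed/s) + \log 2\Bigr)\,,
\end{equation*}
where the regularizer term is controlled immediately by $\norm{\thetastar}_2 \le B$, giving $\rho(\thetastar) \le \tfrac12$.

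The crux of the argument is the bound on $\gamma_{n,1/2}^{\rho,\bar S}$, for which I would adapt the Gaussian-integral computation of \Cref{lemma:smoothinfo} to the $s$-dimensional coordinate subspace $\Theta_{\bar S}$. Writing $\Lambda_{n,\bar S} = \sum_{t=1}^n X_{t,\bar S}X_{t,\bar S}\transpose$ for the $\bar S$-block of $\Lambda_n$, the $M$-smoothness of $\psi$ gives $\text{Hess}[Z_{n,1/2}^\rho] \preceq \tfrac{M}{2}\Lambda_n + \Id/B^2$; restricting the induced Bregman divergence to directions in $\Theta_{\bar S}$ and evaluating the resulting $s$-dimensional Gaussian integral exactly as in \Cref{lemma:smoothinfo} (but over $\Theta_{\bar S}$) gives $\gamma_{n,1/2}^{\rho,\bar S} \le \tfrac12\log\det(\tfrac{MB^2}{2}\Lambda_{n,\bar S} + \Id_s)$. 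The key observation is then that the $\ell_\infty$ covariate bound transfers to a trace bound carrying exactly one factor of $s$: since $\norm{X_{t,\bar S}}_2^2 \le s\,\infnorm{X_t}^2 \le sL_\infty^2$, we have $\trace{\Lambda_{n,\bar S}} \le snL_\infty^2$. Applying the inequality of arithmetic and geometric means in the same form as in \Cref{cor:smooth}, now to an $s\times s$ matrix, yields
\begin{equation*}
\det\Bigl(\tfrac{MB^2}{2}\Lambda_{n,\bar S} + \Id_s\Bigr) \le \Bigl(1 + \tfrac{MB^2\,\trace{\Lambda_{n,\bar S}}}{2s}\Bigr)^s \le \Bigl(1 + \tfrac{MB^2 L_\infty^2 n}{2}\Bigr)^s\,,
\end{equation*}
so that $\gamma_{n,1/2}^{\rho,\bar S} \le \tfrac{s}{2}\log\bigl(1 + MB^2L_\infty^2 n/2\bigr)$. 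I emphasize that the $s$ arising from the subspace dimension in the trace bound cancels precisely against the $1/s$ produced by the AM--GM step, leaving a clean dependence on $L_\infty^2$ rather than the ambient $\ell_2$ scaling.

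Finally, I would substitute these three estimates back. Collecting the leading terms gives $\regretql{1/2}(\thetastar) \le s\log(1 + MB^2L_\infty^2 n/2) + 2s\log(2ed/s) + 1 + 2\log 2$; plugging this into $\beta_n = \frac2m\regretql{1/2}(\thetastar) + \frac4m\log\frac1\delta$, regrouping the $s$-dependent logarithms as $\frac{4s}{m}\log\frac{2ed\sqrt{1 + MB^2L_\infty^2 n/2}}{s}$ and the remaining constants as $\frac4m\bigl(\log 2 + \tfrac12 + \log\tfrac1\delta\bigr) = \frac4m\log\frac{2\sqrt e}{\delta}$, reproduces the stated $\beta_n$ exactly. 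I expect the only genuinely delicate step to be the restricted information-gain bound: one must verify that the Bregman-divergence reformulation of $\gamma_{n,1/2}^{\rho,\bar S}$ (established in \Cref{sec:res_breg_info}) makes the argument of \Cref{lemma:smoothinfo} go through verbatim on $\Theta_{\bar S}$, so that only the block $\Lambda_{n,\bar S}$ enters and the subspace dimension is $s$ rather than $d$; the rest is routine bookkeeping of constants.
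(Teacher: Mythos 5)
Your proposal is correct and follows essentially the same route as the paper's proof: reduce via \Cref{thm:ewa_alg_otcs} to bounding $\regretql{1/2}(\thetastar)$, apply \Cref{lem:sparse} with the subset-selection prior, bound the restricted Bregman information gain by $\tfrac{s}{2}\log\det(\tfrac{MB^2}{2}\Lambda_n(S^\star)+\Id_s)$ via the restricted analogue of \Cref{lemma:smoothinfo} together with $\|X_t(S^\star)\|_2 \le \sqrt{s}L_\infty$ and the AM--GM/trace argument (the paper packages this as \Cref{lem:det_tr}), and regroup constants exactly as the paper does. The step you flag as delicate---that the Gaussian-integral computation goes through on $\Theta_{\bar S}$ via the reformulation in \Cref{sec:res_breg_info}---is indeed precisely how the paper handles it, so there is no gap.
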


\begin{proof}
Let $S^{\star} = \mathrm{supp}(\theta^{\star})$ denote the support of $\theta^{\star}$. For any vector $x \in \mathbb{R}^d$, we define $x(S^{\star}) \in \mathbb{R}^{s}$ to be the $s$-dimensional subvector indexed by the elements of $S^{\star}$ (cf. \Cref{sec:res_breg_info}). Due to \Cref{thm:ewa_alg_otcs}, 
we only need to bound $\regretql{1/2}(\theta^{\star})$. We thus instantiate the regret bound of \Cref{lem:sparse} with the choice 
$\rho(\theta) = \frac{\|\theta\|_2^2}{2B^2}$, which yields
\begin{equation}
\regretql{1/2}(\theta^{\star}) \leq 2\left(\gamma_{n, 1/2}^{\rho, S^{\star}} + \frac{\|\theta^{\star}\|_2^2}{2B^2} + s\log\frac{2ed}{s} + \log 2\right)\,.\label{eqn:sparse_regret}
\end{equation}
By assumption, $\|\theta^{\star}\|_2^2/B^2 \leq 1$. Using \Cref{lemma:smoothinfo}, the restricted Bregman information gain is bounded as
\begin{equation*}
\gamma_{n, 1/2}^{\rho, S^{\star}} \leq \frac{1}{2}\log\det(\tfrac{MB^2}{2}\Lambda_n(S^{\star}) + \Id_s)\,,
\end{equation*}
where $\Lambda_n(S^{\star}) = \sum_{t=1}^{n}X_t(S^{\star})X_t(S^{\star})^{\top}$ and $\Id_s$ is the $s \times s$ identity matrix. Since, $\|X_t(S^{\star})\|_2 \leq \sqrt{s}L_{\infty}$, \Cref{lem:det_tr} tells us that
\begin{equation*}
\frac{1}{2}\log\det(\tfrac{MB^2}{2}\Lambda_n(S^{\star}) + \Id_s) \leq \frac{s}{2}\log(1 + MB^2L_{\infty}^2n/2)\,.
\end{equation*}
Combining this upper bound on the restricted Bregman information gain with the inequality in Equation \eqref{eqn:sparse_regret}, we see that
\begin{equation*}
\regretql{1/2}(\theta^{\star}) \leq 2s\log\frac{2ed\sqrt{1 + MB^2L_{\infty}^2n/2}}{s} + 2\log(2\sqrt{e})\,.
\end{equation*}
The statement then follows by substituting this regret bound into the expression for $\beta_n$ in \Cref{thm:ewa_alg_otcs}.
\end{proof}

\paragraph{Comparison with state of the art.} As promised, the bound in \Cref{cor:sparse_ewa_alg_otcs} removes the 
dependence on the maximum label that has appeared in previous bounds like Equation~\eqref{eqn:seqsew_set}. This is a 
clear improvement over the best previous results by \citet{abbasi2012online} and \citet{jun2017scalable} that both 
suffer from this factor. Additionally, the bound given above comes with small, explicit constants and a much simpler 
overall expression. These latter improvements come from using the regret bound of \Cref{lem:sparse} for EWA with a 
sparsity-inducing prior, instead of relying on the algorithm of \citet{gerchinovitz2013sparsity} whose regret 
bounds are much more complex to state. This complexity largely comes from their algorithm being completely 
parameter-free in the sense that it does not require any prior knowledge about the sparsity parameter $s$ or the norm 
of $\thetastar$. In our setting, knowledge of these parameters is necessary either way to evaluate the confidence 
width, which allows us to forgo this otherwise desirable parameter-free property.

\section{Discussion}\label{sec:conclusion}
We have introduced a framework that establishes a link between statistical inference and sequential 
prediction by providing a reduction scheme that allows constructing confidence sets for a broad class of statistical 
models. This work fits into a line of work on algorithmic statistics initiated by \citet{RS17}, which bridges
statistics and the theory of algorithms by establishing similar reductions between the problems in either of the two 
areas. We have demonstrated the effectiveness of this framework in yet another context, and applied our framework to 
recover and improve state-of-the-art results for statistical inference in generalized linear models, as well as provide 
completely new confidence sets for these models. Besides these concrete results, our framework enables further progress 
in the area by streamlining the process of proving new concentration inequalities, effectively reducing the otherwise 
complex task of proving concentration inequalities to the relatively simpler task of regret analysis of online 
algorithms. Accordingly, any new result in this actively researched area can be immediately turned into new 
concentration bounds. In this section, we close by discussing some further aspects our framework and results, as well 
as discuss questions that we leave open for future research.

\paragraph{Extensions.} For sake of concreteness and simplicity, we have focused on the class of generalized linear 
models with finite-dimensional parameters. Our framework can be extended in several straightforward ways. First, we 
mention that all of our results can be shown to also hold for ``sub-exponential families'' whose moment-generating 
functions satisfy the inequality $\mathbb{E}\left[\exp(\beta Y_t)|\mathcal{F}_{t-1}\right] \le 
\exp(\psi(\beta + \inner{\theta^\star}{X_t}) - \psi(\inner{\theta^\star}{X_t}))$ for some $\psi$. Indeed, it is easy to 
see that the conclusion of \Cref{prop:generic_lbd} continues to hold under these relaxed conditions, and thus all 
subsequent results can be generalized in this way. In particular, all results proved for linear models can be also 
shown to hold under sub-Gaussian noise. Second, we mention that our results can be directly extended to 
infinite-dimensional models where the linear function $\iprod{\thetastar}{\cdot}$ is replaced by some $f^\star$ that 
belongs to a reproducing kernel Hilbert space (see, e.g., \citealp[Chapter~3]{abbasi2012thesis}, 
\citealp{emmenegger2023likelihood} or \citealp{flynn2024tighter}). We opted to not include results for this case to 
keep the paper less technical and easier to read, and leave working out the (potentially nontrivial) details for future 
work.

\paragraph{The tightness of our confidence sets.} For all cases we have studied in Section~\ref{sec:cases}, our 
results either recover the best known guarantees or make improvements over them. In some important special cases, these 
results also match the best achievable bounds: for linear models, the bounds under adaptively chosen 
covariates (Section~\ref{sec:cases_adaptive}) match the lower bound of \citet{lattimore2023lower}, and the bounds for 
obliviously chosen covariates (Section~\ref{sec:cases_oblivious}) can only be improved in terms of constant 
factors (see, e.g., Example 15.14 in \citealp{wainwright2019high}). This suggests that our reduction technique does not 
introduce any gaps between the best 
achievable regret and the best achievable concentration inequalities (which is consistent with the general 
equivalence results between the two types of bounds proved by \citealp{RS17}). As for the tightness of the 
confidence sets and regret bounds we provide in this paper, we believe that several improvements should be possible. In 
particular, under certain regularity conditions (such as boundedness of the covariates and the true parameter 
$\thetastar$) the minimax regret for sequential probability assignment with GLMs is known to scale 
asymptotically as $\frac{d\log n}{2}$ \citep{grunwald2007minimum}. Finite-sample guarantees that match these rates are 
quite rare in 
the literature. One notable exception beyond linear models is the work of \citet{jacquet2022precise} that provides a 
finite-sample upper bound on the regret of the NML forecaster for logistic regression with bounded covariates, which 
matches the minimax rate up to some explicitly given additional terms that vanish as $n$ grows large. In comparison to 
these bounds, the ones we provide are loose by a factor of the strong convexity constant $1/m$, which is often very 
large in GLMs of practical interest (e.g., as noted earlier, it can be exponentially large in $d$ for the important case 
of logistic regression). It remains unclear if further improvements are possible at the level of generality that we have 
considered in this work, and we highlight this question as the most exciting one for future research.

\paragraph{Connections with mean estimation.} It is insightful to instantiate our confidence sets in the special case 
of (one-dimensional) mean estimation, recovered by setting $X_t = 1$ for all $t$. In case the noise is sub-Gaussian, 
instantiating our results for obliviously chosen covariates (\Cref{thm:transductive}) correctly recovers Hoeffding's 
inequality without any additional $\log n$ factors (up to a small difference in constants). The case 
of observations that are almost 
surely bounded in $[0,1]$ can be handled by noticing that such random variables are ``sub-Bernoulli'' in the sense 
defined in the paragraph above, and thus a reduction to logistic regression ($\psi:z\mapsto\log(1+e^z)$) can be used. 
In this case, the shape of our confidence bounds matches the shape implied by \citet{orabona2024tight}, which has been 
recently shown to be optimal in an appropriate sense \citep{clerico2024optimality}. Notably however, the width of our 
confidence interval in this case is only guaranteed to be meaningfully bounded if the range of the parameter $p = 
\EE{Y_t}$ is constrained to lie strictly within the limits of the unit interval. Concretely, if it is known 
that $p\in [p_0,(1-p_0)]$ holds for some known $p_0 > 0$, then the loss function $\loss_t$ can be shown to be 
strongly convex with parameter $m = \frac{1}{p_0(1-p_0)}$, and applying \Cref{thm:transductive} gives a confidence 
width of order $\log \pa{\frac{1}{p_0(1-p_0)}}$. This recovers the confidence interval of \citet{orabona2024tight} 
whenever $p_0 = \Omega\pa{\frac{1}{n}}$, which we consider to be a reasonable regime. More generally, our bounds 
recover the results of \citet{bregman2023} in the case of one-dimensional exponential families (as can be seen by 
directly combining our \Cref{thm:main-reduction} with \Cref{lemma:regrewa}).

\paragraph{NML and minimax optimality.} As we have noted in Section~\ref{sec:NML}, the Normalized Maximum Likelihood 
forecaster is minimax optimal for the problem of sequential probability assignment (up to a regularization 
term in the version we have described). Unfortunately however, directly bounding the regret of NML (as 
expressed by the Shtarkov sum) is very challenging even for relatively simple models like GLMs. Beyond the simple 
linear case (where NML is essentially equivalent to EWA), the only bounds we are aware of only hold asymptotically or 
for mean estimation without covariates \citep{Rissanen1996,TakeuchiB97,Szpankowski1998,cesa2006prediction,grunwald2007minimum,SuzukiYamanishi2018,jacquet2022precise}. We also remark that the equalizer property of 
NML-style forecasters is not always a desirable feature in the context of our study. Indeed, notice that several of our 
guarantees are stated in terms of the realized sequence of covariates, and the worst-case upper bounds we provide in 
terms of problem parameters like $d$ or $n$ are often loose (see, e.g., the bound of Equation~\eqref{eq:regret_rank} 
and the discussion around it). Thus, equalizer strategies that take into account the impact of predictions on the 
sequence of covariates (such as the cNML strategy of \citealp{liu2024sequential}) may result in unnecessarily large 
regret when the realized sequence turns out to be benign. Studying the best achievable regret for sequential 
probability assignment remains an actively researched area, and we remain hopeful that new developments (such as 
\citealp{qian2024refined,jia2025minimax}) will enable proving tighter finite-sample bounds and a more fine-grained 
understanding of the best possible rates for special cases like GLMs and beyond.

\paragraph{Uncertainty quantification beyond GLMs.} The reduction framework we propose in this paper is clearly 
applicable for more general models, and we chose to focus on GLMs for the sake of concreteness. Indeed, it is easy to 
see that our arguments continue to hold regardless of the form of the functional dependence between the parameters of 
the model and the likelihood, and that one can derive convex confidence sets using our method whenever the 
log-likelihood is convex (as is the case, for example, for additive models and generalized additive models, 
cf.~\citealp{hastie1986generalized}). It is much more interesting to study the possibility of extending our framework 
towards 
non-convex models such as deep neural networks, and develop a rigorous methodology for statistical inference 
for modern machine learning systems. We believe that our methodology could prove to be a solid basis for addressing 
``uncertainty quantification'' tasks for machine learning, such as providing confidence sets for local minimizers of 
the population loss, or valid prediction intervals for individual data points. Obviously, our methodology is already 
applicable as it is for non-convex log-likelihoods, but its usefulness is limited because in such settings it leads to 
non-convex confidence sets and prediction intervals. A promising direction towards removing this limitation is to build 
on recently popular techniques for local linearization of nonlinear models (such as Laplace approximations, 
cf.~\citealp{Mac92,KIAK19,DKIEBH21,IKB21,CPFHB24}), and building confidence sets on the resulting linear 
model. We are optimistic that our method that directly addresses statistical questions via reductions to algorithmic 
questions can prove to be powerful in this highly challenging setting, especially given the abundance of algorithmic 
results and the relative scarcity of statistical guarantees in this context.

\begin{appendix}\label{appn} 

\section{Restricted Bregman Information Gain}
\label{sec:res_breg_info}

In this section, we justify why $\gamma_{n,\lambda}^{\rho, S}$ is called the restricted Bregman information gain. For any subset $S \subseteq [d]$ and any vector $x \in \mathbb{R}^d$, we define $x(S) \in \mathbb{R}^{|S|}$ to be the subvector indexed by elements of $S$. This can be written as $x(S) = \Pi_Sx$, where $\Pi_S \in \mathbb{R}^{|S|\times d}$ is the matrix with rows $(e_i)_{i \in S}$, and $e_1, \dots, e_d$ are the standard basis vectors of $\mathbb{R}^d$. For each subset $S \subseteq [d]$, we will define a restricted version of $Z_{n,\lambda}^{\rho}$ which has domain $\mathbb{R}^{|S|}$. For any vector $v \in \mathbb{R}^{|S|}$, we define the restricted loss $\ell_t^S$ as
\begin{equation*}
\ell_t^S(v) = -\inner{v}{X_t(S)}Y_t + \psi(\inner{v}{X_t(S)}) -\log h(Y_t)\,.
\end{equation*}
In addition, we define a collection of convex functions $\rho^S: \mathbb{R}^{|S|} \to \mathbb{R}$ such that for all $S \subseteq [d]$ and $\theta \in \Theta_S$, $\rho(\theta) = \rho^{S}(\theta(S))$. Note that if $\rho(\theta) = \|\theta\|_p^p$, then we can define $\rho^S$ by $\rho^S(v) = \|v\|_p^p$. We can now define $Z_{n,\lambda}^{\rho,S}: \mathbb{R}^{|S|} \to \mathbb{R}$ to be the function
\begin{equation*}
Z_{n,\lambda}^{\rho, S}(v) = \sum_{t=1}^{n}\lambda\ell_t^S(v) + \rho^S(v)\,.
\end{equation*}
We notice that for all $S \subseteq [d]$ and $\theta \in \Theta_S$, $Z_{n,\lambda}^{\rho, S}$ satisfies $Z_{n,\lambda}^{\rho}(\theta) = Z_{n,\lambda}^{\rho, S}(\theta(S))$. If we let $\wh\theta_{n,\lambda,S} = \argmin_{\theta \in \Theta_S}\{Z_{n,\lambda}^{\rho}(\theta)\}$, then $\wh\theta_{n,\lambda,S}(S) = \argmin_{v \in \mathbb{R}^{|S|}}\{Z_{n,\lambda}^{\rho, S}(v)\}$. This means that, for any $S \subseteq [d]$ and $\theta \in \Theta_S$,
\begin{equation*}
Z_{n,\lambda}^{\rho}(\theta) - Z_{n,\lambda}^{\rho}(\wh\theta_{n,\lambda,S}) = Z_{n,\lambda}^{\rho,S}(\theta(S)) - Z_{n,\lambda}^{\rho,S}(\wh\theta_{n,\lambda,S}(S)) = \mathcal{B}_{Z_{n,\lambda}^{\rho,S}}(\theta(S), \wh\theta_{n,\lambda,S}(S))\,.
\end{equation*}
Therefore, for any subset $S \subseteq [d]$, the definition of the restricted Bregman information gain in Equation \eqref{eqn:res_breg_info} is equivalent to
\begin{equation*}
\gamma_{n,\lambda}^{\rho, S} = -\log\left(\frac{\int_{\Theta_S}\exp(-\mathcal{B}_{Z_{n,\lambda}^{\rho,S}}(\theta(S), \wh\theta_{n,\lambda,S}(S)))\mathrm{d}\theta}{\int_{\Theta_S}\exp(-\rho^S(\theta(S))\mathrm{d}\theta}\right)\,.
\end{equation*}

\section{Technical Lemmas}

\begin{lemma}
Let $q_1, \dots, q_n$ be the sequence of distributions played by the EWA forecaster with learning rate $\lambda > 0$. Then
\begin{equation*}
\regretql{\lambda}(\bar{\theta}) = -\frac{1}{\lambda}\log\int e^{-\lambda\sum_{t=1}^{n}\big(\ell_t(\theta) - 
\ell_t(\bar{\theta})\big)}\dd q_1(\theta)\,.
\end{equation*}
\label{lem:telescope}
\end{lemma}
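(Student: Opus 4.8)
The plan is to run the standard telescoping computation for the exponentially weighted average forecaster, organized around the sequence of partition functions produced by its update rule. First I would introduce, for $t = 1,\dots,n+1$, the normalizing constants $W_t = \int e^{-\lambda\sum_{s=1}^{t-1}\ell_s(\theta)}\dd q_1(\theta)$, with the convention that the empty sum gives $W_1 = \int \dd q_1 = 1$ (since $q_1 \in \Delta_\Theta$). With this notation the EWA update reads $\dd q_t = W_t^{-1} e^{-\lambda\sum_{s=1}^{t-1}\ell_s}\dd q_1$, which is exactly the definition supplied in \Cref{sec:EWA}.

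The key step is to evaluate a single scaled-log-loss term. Substituting the explicit form of $q_t$ into the definition of $\mathcal{L}_{t,\lambda}$ and merging the two exponentials gives $\int e^{-\lambda\ell_t(\theta)}\dd q_t(\theta) = W_t^{-1}\int e^{-\lambda\sum_{s=1}^{t}\ell_s(\theta)}\dd q_1(\theta) = W_{t+1}/W_t$, so that $\mathcal{L}_{t,\lambda}(q_t) = -\frac{1}{\lambda}\log(W_{t+1}/W_t)$. Each per-round loss is thus a log-ratio of consecutive partition functions, and summing over $t = 1,\dots,n$ collapses the telescoping sum to $\sum_{t=1}^n \mathcal{L}_{t,\lambda}(q_t) = -\frac{1}{\lambda}\log W_{n+1}$, using $W_1 = 1$.

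To finish, I would subtract the comparator loss $\sum_{t=1}^n \ell_t(\bar\theta)$ and pull it inside the logarithm as a factor $e^{\lambda\sum_t\ell_t(\bar\theta)}$, so that $\regretql{\lambda}(\bar\theta) = -\frac{1}{\lambda}\log\bigl(W_{n+1}\,e^{\lambda\sum_t\ell_t(\bar\theta)}\bigr)$; recognizing $W_{n+1}\,e^{\lambda\sum_t\ell_t(\bar\theta)} = \int e^{-\lambda\sum_{t=1}^n(\ell_t(\theta)-\ell_t(\bar\theta))}\dd q_1(\theta)$ yields exactly the claimed identity. The argument is entirely mechanical and presents no genuine obstacle; the only points requiring care are the bookkeeping of time indices in the partition functions --- in particular the shift whereby $W_t$ encodes the cumulative loss up to round $t-1$ while $W_{t+1}$ appears after incorporating the round-$t$ loss --- and checking that $W_1 = 1$ so that the lower boundary term of the telescoping sum vanishes.
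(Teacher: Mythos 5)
Your proof is correct and follows essentially the same route as the paper's: both substitute the EWA update into the scaled log loss, recognize each round's loss as a log-ratio of consecutive partition functions, and telescope, with your $W_t$ notation merely making explicit the ratios that the paper writes out inline. No gaps; the bookkeeping points you flag (the index shift and $W_1=1$) are exactly the right ones to check.
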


\begin{proof}
By definition of the EWA forecaster,
\begin{equation*}
\frac{\mathrm{d}q_t}{\mathrm{d}q_1}(\theta) = \frac{e^{-\lambda\sum_{k=1}^{t-1}\ell_k(\theta)}}{\int e^{-\lambda\sum_{k=1}^{t-1}\ell_k(\theta)}\mathrm{d}q_1(\theta)}\,.
\end{equation*}
Therefore, the total loss of the EWA forecaster is
\begin{align*}
\sum_{t=1}^{n}\mathcal{L}_{t, \lambda}(q_t) &= \sum_{t=1}^{n}-\frac{1}{\lambda}\log\int e^{-\lambda\ell_t(\theta)}\mathrm{d}q_t(\theta)\\
&= \sum_{t=1}^{n}-\frac{1}{\lambda}\log\left(\frac{\int e^{-\lambda\sum_{k=1}^{t}\ell_k(\theta)}\mathrm{d}q_1(\theta)}{\int e^{-\lambda\sum_{k=1}^{t-1}\ell_k(\theta)}\mathrm{d}q_1(\theta)}\right)\\
&= -\frac{1}{\lambda}\log\int e^{-\lambda\sum_{t=1}^{n}\ell_t(\theta)}\mathrm{d}q_1(\theta)\,.
\end{align*}
The statement follows by subtracting the total loss of $\bar{\theta}$ from both sides.
\end{proof}

\begin{lemma}
Let $x_1, \dots, x_n \in \mathbb{R}^d$ be any sequence satisfying $\max_{t \in [n]}\|x_t\|_2 \leq L$ and set $\Lambda_n = \sum_{t=1}^{n}x_tx_t^{\top}$. Then for every $\alpha > 0$,
\begin{equation*}
\log\det(\alpha\Lambda_n + \Id) \leq \mathrm{rank}(\Lambda_n)\log\left(1 + \frac{\alpha n L^2}{\mathrm{rank}(\Lambda_n)}\right)\,.
\end{equation*}
\label{lem:det_tr}
\end{lemma}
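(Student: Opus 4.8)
The plan is to reduce the determinant to a sum over the \emph{nonzero} eigenvalues of $\Lambda_n$ and then apply Jensen's inequality, being careful to average only over the rank-many nonzero eigenvalues rather than over all $d$ of them. Since $\Lambda_n = \sum_{t=1}^n x_t x_t^{\top}$ is positive semidefinite, I would write $r = \mathrm{rank}(\Lambda_n)$ and let $\mu_1, \dots, \mu_r > 0$ denote its nonzero eigenvalues, the remaining $d - r$ eigenvalues being zero. The matrix $\alpha \Lambda_n + \Id$ is then diagonalizable with eigenvalues $1 + \alpha \mu_i$ for $i \le r$ and $1$ for each zero eigenvalue of $\Lambda_n$; the latter contribute $\log 1 = 0$, so
\begin{equation*}
\log\det(\alpha \Lambda_n + \Id) = \sum_{i=1}^r \log(1 + \alpha \mu_i)\,.
\end{equation*}

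Next I would invoke the concavity of the logarithm through Jensen's inequality, averaging over exactly the $r$ nonzero eigenvalues:
\begin{equation*}
\frac{1}{r}\sum_{i=1}^r \log(1 + \alpha \mu_i) \le \log\left(1 + \frac{\alpha}{r}\sum_{i=1}^r \mu_i\right)\,.
\end{equation*}
Finally, the sum of the nonzero eigenvalues equals the trace of $\Lambda_n$, which I bound using the norm constraint on the covariates:
\begin{equation*}
\sum_{i=1}^r \mu_i = \trace{\Lambda_n} = \sum_{t=1}^n \twonorm{x_t}^2 \le n L^2\,.
\end{equation*}
Substituting this into the previous display and multiplying through by $r$ yields precisely the claimed bound.

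I do not expect any genuine obstacle here: this is the standard determinant–trace inequality. The single point requiring care—and the only modification relative to the textbook version, which would produce $d$ in place of $\mathrm{rank}(\Lambda_n)$—is to restrict the Jensen averaging to the nonzero spectrum. Because every zero eigenvalue of $\Lambda_n$ maps to a unit eigenvalue of $\alpha \Lambda_n + \Id$ and hence contributes nothing to $\log\det$, one may average over $r$ terms instead of $d$, and it is exactly this observation that gives the tighter, rank-dependent constant.
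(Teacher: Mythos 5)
Your proof is correct and takes essentially the same route as the paper: the paper applies the AM--GM inequality to the $k = \mathrm{rank}(\Lambda_n)$ nontrivial eigenvalues of $\alpha\Lambda_n + \Id$ together with the trace bound $\mathrm{tr}(\alpha\Lambda_n + \Id) \leq \alpha n L^2 + d$, which is exactly your Jensen step (AM--GM being Jensen's inequality for the logarithm) phrased in terms of the eigenvalues $1 + \alpha\mu_i$ rather than $\mu_i$. Your key observation---restricting the averaging to the nonzero spectrum to obtain the rank in place of $d$---is precisely the point of the paper's argument as well.
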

\begin{proof}
Let $k = \mathrm{rank}(\Lambda_n)$ and let $\lambda_1, \dots, \lambda_d$ be the eigenvalues of $\alpha\Lambda_n + \Id$, arranged in descending order. Since $\Lambda_n$ has rank $k$, $\lambda_{k+1} = \lambda_{k+2} = \cdots = \lambda_d = 1$. This means that $\det(\alpha\Lambda_n + \Id) = \prod_{i=1}^{k}\lambda_k$. Similarly, $\mathrm{tr}(\alpha\Lambda_n + \Id) = \sum_{i=1}^{k}\lambda_k + (k-d)$. Using the inequality of arithmetic and geometric means, we have
\begin{equation*}
\det(\alpha\Lambda_n + \Id) = \prod_{i=1}^{k}\lambda_k \leq \left(\frac{\sum_{i=1}^{k}\lambda_i}{k}\right)^{k} = \left(\frac{\mathrm{tr}(\alpha\Lambda_n + \Id) + (k - d)}{k}\right)^{k}\,.
\end{equation*}
Finally, we notice that
\begin{equation*}
\mathrm{tr}(\alpha\Lambda_n + \Id) = \alpha\sum_{t=1}^{n}\|x_t\|_2^2 + d \leq \alpha nL^2 + d\,.
\end{equation*}
Combining this with the previous inequality and taking the logarithm of both sides gives the statement of the lemma.
\end{proof}

\begin{lemma}
For any $z^{\prime} \in \mathbb{R}$, the map $z \mapsto d_{\psi}(z, z^{\prime})$ is non-decreasing on the interval $[z^{\prime}, \infty)$ and non-increasing on the interval $(-\infty, z^{\prime}]$.
\label{lem:quasi}
\end{lemma}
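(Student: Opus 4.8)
The plan is to reduce the claim to the single fact that $\psi'$ is non-decreasing, which is exactly the content of the convexity of $\psi$. Fix $z' \in \mathbb{R}$ and view $g(z) = d_{\psi}(z,z') = \tfrac{1}{2}\psi(z) + \tfrac{1}{2}\psi(z') - \psi\bpa{\tfrac{z+z'}{2}}$ as a function of $z$ alone. First I would differentiate in $z$ (recall that in our GLM setting $\psi$ is differentiable, with $\psi' = \mu$ the link function), obtaining
\[
g'(z) = \tfrac{1}{2}\psi'(z) - \tfrac{1}{2}\psi'\Bpa{\tfrac{z+z'}{2}}\,.
\]

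The key step is then to compare the two arguments appearing here, namely $z$ and the midpoint $\tfrac{z+z'}{2}$. If $z \ge z'$ then $\tfrac{z+z'}{2} \le z$, whereas if $z \le z'$ then $\tfrac{z+z'}{2} \ge z$. Since convexity of $\psi$ makes $\psi'$ non-decreasing, in the first case $\psi'\bpa{\tfrac{z+z'}{2}} \le \psi'(z)$, so that $g'(z) \ge 0$, and in the second case $\psi'\bpa{\tfrac{z+z'}{2}} \ge \psi'(z)$, so that $g'(z) \le 0$. These sign conditions immediately give that $g$ is non-decreasing on $[z',\infty)$ and non-increasing on $(-\infty, z']$, which is precisely the claim.

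Because the argument rests only on the monotonicity of $\psi'$, I do not expect any genuine obstacle; this lemma is really just a one-line calculus observation. The only point worth flagging is the differentiability of $\psi$: this holds throughout our setup since $\psi' = \mu$, but if one wanted the statement for a merely convex $\psi$ the same conclusion follows by writing, for $z' \le z_1 < z_2$, the increment $d_{\psi}(z_2,z') - d_{\psi}(z_1,z') = \tfrac{1}{2}\int_{z_1}^{z_2}\bpa{\psi'(s) - \psi'(\tfrac{s+z'}{2})}\,\dd s$ and using that the integrand is non-negative whenever $s \ge z'$; the symmetric computation handles the interval $(-\infty,z']$. This difference-of-increments form makes clear that no smoothness beyond convexity is truly needed, since it can be phrased directly in terms of the monotone difference quotients of $\psi$.
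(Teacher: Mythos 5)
Your proof is correct and coincides with the paper's own argument: both differentiate $z \mapsto d_{\psi}(z,z')$ to get $\tfrac{1}{2}\psi'(z) - \tfrac{1}{2}\psi'\bpa{\tfrac{z+z'}{2}}$ and conclude from the monotonicity of $\psi'$ by comparing $z$ with the midpoint. Your closing remark extending the claim to merely convex $\psi$ via the integral form of the increment is a harmless bonus beyond what the paper records.
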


\begin{proof}
Let $f(z) = d_{\psi}(z, z^{\prime})$. From the definition of $d_{\psi}$, we have
\begin{equation*}
f^{\prime}(z) = \frac{1}{2}\psi^{\prime}(z) - \frac{1}{2}\psi^{\prime}(z/2 + z^{\prime}/2)\,.
\end{equation*}

Since $\psi$ is convex and differentiable, $\psi^{\prime}$ is non-decreasing. We notice that for any $z \geq z^{\prime}$, $z \geq z/2 + z^{\prime}/2$, which means that $f^{\prime}(z) \geq 0$. The same argument shows that for any $z \leq z^{\prime}$, $f^{\prime}(z) \leq 0$.
\end{proof}

\begin{lemma}
For any $b > 0$, any $z^{\prime} \in [-b, b]$ and all $z \in \real$, $d_{\psi}([z]_b, z^{\prime}) \leq d_{\psi}(z, z^{\prime})$.
\label{lem:truncation}
\end{lemma}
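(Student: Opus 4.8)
The plan is to reduce the statement entirely to the monotonicity recorded in \Cref{lem:quasi}, exploiting the fact that the hypothesis $z' \in [-b,b]$ forces the clamp value to land on the same side of $z'$ as $z$ whenever $z$ escapes the interval $[-b,b]$. The natural approach is a three-way case analysis on the position of $z$ relative to the truncation window.

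First, if $z \in [-b,b]$, then $[z]_b = z$ and the claimed inequality holds trivially with equality, so there is nothing to prove. Next, suppose $z > b$, so that $[z]_b = b$. Since $z' \le b$ by assumption, both $b$ and $z$ lie on the half-line $[z', \infty)$, on which \Cref{lem:quasi} guarantees that $w \mapsto d_\psi(w, z')$ is non-decreasing. Because $z' \le b \le z$, monotonicity then gives $d_\psi([z]_b, z') = d_\psi(b, z') \le d_\psi(z, z')$, which is exactly what we want.

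The remaining case $z < -b$ is handled by the symmetric argument: here $[z]_b = -b$, and since $-b \le z'$, both $-b$ and $z$ lie in $(-\infty, z']$, where $w \mapsto d_\psi(w, z')$ is non-increasing by \Cref{lem:quasi}. As $z \le -b \le z'$, this yields $d_\psi(-b, z') \le d_\psi(z, z')$. Combining the three cases establishes the lemma for all $z \in \real$.

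I do not anticipate any genuine obstacle: the entire content is the observation that truncating $z$ to $[-b,b]$ can only move it \emph{towards} $z'$ within a region where $d_\psi(\cdot, z')$ is monotone, which is precisely guaranteed by $z' \in [-b,b]$ together with \Cref{lem:quasi}. The only point requiring care is matching the correct monotonicity direction (non-decreasing versus non-increasing) to the correct half-line in each case, and confirming that the clamp value $b$ (respectively $-b$) sits between $z'$ and $z$ so that the monotonicity bound points in the desired direction.
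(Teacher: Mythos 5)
Your proposal is correct and follows essentially the same route as the paper's proof: both reduce the claim to the monotonicity of $z \mapsto d_{\psi}(z, z^{\prime})$ established in \Cref{lem:quasi}, using a case analysis in which the truncation point ($b$ or $-b$) is shown to lie between $z^{\prime}$ and $z$. The only cosmetic difference is that you split cases by the position of $z$ relative to $[-b,b]$ while the paper splits by the sign of $z - z^{\prime}$, and either organization yields the same argument.
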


\begin{proof}
Suppose $z \geq z^{\prime} \geq -b$. If $z \leq b$, then $d_{\psi}([z]_b, z^{\prime}) = d_{\psi}(z, z^{\prime})$. Otherwise, if $z > b$, then $z > [z]_b \geq z^{\prime}$. Thus, $d_{\psi}([z]_b, z^{\prime}) \leq d_{\psi}(z, z^{\prime})$ follows from \Cref{lem:quasi}. The same argument works when $z \leq z^{\prime}$.
\end{proof}

\end{appendix}


\begin{acks}[Acknowledgments]
The authors would like to thank the following colleagues for insightful discussions throughout the preparation of this 
work: Yoav Freund, Kwang-Sung Jun, Johannes Kirschner, Junghyun Lee, Nishant Mehta, Francesco Orabona, Sasha Rakhlin, 
Aaditya Ramdas, Csaba Szepesv\'ari.
\end{acks}

\begin{funding}
This project has received funding from the European Research Council (ERC), under the European
Union’s Horizon 2020 research and innovation programme (Grant agreement No.~950180).
\end{funding}

\bibliographystyle{imsart-nameyear}
\bibliography{main,ngbib}

\end{document}